\documentclass[12pt]{amsart}
\usepackage{amsmath}
\usepackage{amssymb}
\usepackage{geometry}
\usepackage{graphicx}
\usepackage{amsthm}
\usepackage{setspace}
\usepackage[final]{showkeys}

\DeclareMathOperator{\interior}{interior}

\newtheorem{thm}{Theorem}[section]
\newtheorem{lemma}{Lemma}[section]
\newtheorem{prop}{Proposition}[section]
\newtheorem{cor}{Corollary}[section]

\theoremstyle{definition}

\newtheorem{remark}{Remark}[thm]
\newtheorem{example}{Example}[thm]

\newcommand{\R}{\mathbb{R}}

\begin{document}

\title[Borel--Cantelli lemmas for non-uniformly hyp.\ dyn.\ sys.]{A note on Borel--Cantelli lemmas for non-uniformly hyperbolic dynamical systems}
\author{Nicolai Haydn}
\address{Nicolai Haydn\\ Department of Mathematics, University of
  Southern California\\ Los Angeles, CA, 90089}
\email{nhaydn@usc.edu}
\author{Matthew Nicol}
\address{Matthew Nicol\\Department of Mathematics, University of
  Houston\\ Houston, TX, 77204-3008}
\email{nicol@math.uh.edu}
\author{Tomas Persson}
\address{Tomas Persson\\Department of Mathematics, Lunds Tekniska
  H\"{o}gskola\\Box 118, 22100 Lund}
\email{tomasp@maths.lth.se}
\author{ Sandro Vaienti}
\address{Sandro Vaienti\\ UMR-6207 Centre de Physique Th\'eorique, CNRS,
Universit\'es d'Aix-Marseille I, II, Universit\'e du Sud, Toulon-Var and FRUMAM, F\'ed\'ederation de
Recherche des Unit\'es de Math\'ematiques de Marseille\\ CPT, Luminy Case 907, F-13288
Marseille Cedex 9, France}
\email {vaienti@cpt.univ-mrs.fr}

\thanks{MN  and TP wish to thank the Institut Mittag-Leffler for support and hospitality. MN  wishes to thank 
the Institut de Math\'ematiques Luminy also for  support and hospitality.}

\begin{abstract}
  Let $(B_{i})$ be a sequence of measurable sets in a probability
  space $(X,\mathcal{B}, \mu)$ such that $\sum_{n=1}^{\infty} \mu
  (B_{i}) = \infty$.  The classical Borel--Cantelli lemma states that
  if the sets $B_{i}$ are independent, then $\mu (\{x \in X : x \in
  B_{i} \text{ infinitely often (i.o.) } ) = 1$.

  Suppose $(T,X,\mu)$ is a dynamical system and $(B_i)$ is a sequence
  of sets in $X$.  We consider whether $T^i x\in B_i$ for $\mu$
  a.e.\ $x\in X$ and if so, is there an asymptotic estimate on the
  rate of entry. If $T^i x\in B_i$ infinitely often for $\mu$
  a.e.\ $x$ we call the sequence $B_i$ a Borel--Cantelli sequence.  If
  the sets $B_i:= B(p,r_i)$ are nested balls about a point $p$ then
  the question of whether $T^i x\in B_i$ infinitely often for $\mu$
  a.e.\ $x$  is often called the shrinking target
  problem.

 We show,  under certain assumptions on the measure $\mu$, that for balls $B_i$ if 
  $\mu (B_i)\ge i^{-\gamma}$,  $0<\gamma <1$, then a sufficiently
  high polynomial rate of decay of correlations for Lipschitz
  observations implies that the sequence is Borel--Cantelli. If $\mu
  (B_i)\ge \frac{C\log i}{i}$ then exponential decay of correlations
  implies that the sequence is Borel--Cantelli.  If it is only assumed
  that $\mu (B_i) \ge \frac{1}{i}$ then we give conditions in terms of
  return time statistics which imply that for $\mu$ a.e.\ $p$
  sequences of nested balls $B(p,1/i)$ are
  Borel--Cantelli. Corollaries of our results are that for planar
  dispersing billiards and Lozi maps $\mu$ a.e.\ $p$ sequences of
  nested balls $B(p,1/i)$ are Borel--Cantelli. We also give
  applications of these  results to a variety of non-uniformly hyperbolic dynamical
  systems.
\end{abstract}

\maketitle

\section{Introduction} 

Suppose $(X,\mathcal{B},\mu)$ is a probability space.  For a
measurable set $A \subset X$, let $1_{A}$ denote the characteristic
function of $A$. The classical Borel--Cantelli lemmas (see for
example~\cite[Section~4]{Durret}) state that
\begin{itemize}
  \item[(1)] if $(A_{n})_{n=0}^{\infty}$ is a sequence of measurable
    sets in $X$ and $\sum_{n=0}^{\infty} \mu (A_{n}) < \infty$ then
    $\mu ( x \in A_{n}\ \mathrm{i.o.}) = 0$.
  \item[(2)] if $(A_{n})_{n=0}^{\infty}$ is a sequence of independent
    sets in $X$ and $\sum_{n=0}^{\infty} \mu(A_n) = \infty$, then for
    $\mu$ a.e.\ $x\in X$
    \[
    \frac{S_n (x) }{E_n} \to 1
    \]
    where $S_n (x)=\sum_{j=0}^{n-1} 1_{A_j} (x)$ and $E_n
    =\sum_{j=0}^{n-1} \mu (A_j)$.
\end{itemize}

Suppose now $T \colon X \to X$ is a measure-preserving transformation
of a probability space $(X, \mu)$.  If $(A_n)$ is a sequence of sets
such that $\sum_{n} \mu (A_{n})=\infty$ it is natural in many
applications to ask whether $T^{n} (x) \in A_{n}$ for infinitely many
values of $n$ for $\mu$ a.e.\ $x\in X$ and, if so, is there a
quantitative estimate of the asymptotic number of entry times? For
example, the sequence $(A_{n})$ may be a nested sequence of balls
about a point, a setting which is often called the shrinking target
problem.

The property $\lim_{n\to \infty} \frac{S_n (x)}{E_n}=1$ for $\mu$
a.e.\ $x\in X$ is often called the Strong Borel--Cantelli (SBC) in
contrast to the Borel--Cantelli (BC) property that $S_n (x)$ is
unbounded for $\mu$ a.e.\ $x\in X$.

W. Phillipp~\cite{Phillipp} established the SBC property for sequences of intervals
in the setting of certain maps of the unit interval including the $\beta$
transformation, the Gauss transformation and smooth uniformly
expanding maps.

There have been some results on Borel--Cantelli lemmas for uniformly
hyperbolic systems in higher dimensions.  Chernov and
Kleinbock~\cite{Chernov_Kleinbock} establish the SBC property for
certain families of cylinders in the setting of topological Markov
chains and for certain classes of dynamically-defined rectangles in
the setting of Anosov diffeomorphisms preserving Gibbs measures.
Dolgopyat~\cite{Dolgopyat} has related results for sequences of balls
in uniformly partially hyperbolic systems preserving a measure
equivalent to Lebesgue which have exponential decay of correlations
with respect to H\"older observations.

Kim~\cite{Kim} has established the SBC property for
sequences of intervals in the setting of $1$-dimensional
piecewise-expanding maps $T$ with $\frac{1}{|T^{'}|}$ of bounded
variation.

Kim uses this result to prove some SBC results for non-uniformly
expanding maps with an indifferent fixed point.  In particular, he
considers intermittent maps of the form
\begin{equation}\label{e:intermittent}
T_{\alpha} (x) =
\begin{cases}
x(1 + 2^{\alpha} x^{\alpha}) &\text{if } 0 \le x < \frac{1}{2};\\
2x-1 &\text{if } \frac{1}{2} \le x \le 1.
\end{cases}
\end{equation}
These maps are sometimes called Liverani--Saussol--Vaienti
maps~\cite{LSV}. If $0<\alpha<1$ then $T_{\alpha}$ admits an invariant
probability measure $\mu$ that is absolutely continuous with respect
to Lebesgue measure $m$.  We  use the notation $x_n \sim y_n$ to denote $\lim_{n\to \infty}\frac{x_n}{y_n}=1$.
 The measure $\mu$ has an unbounded density $h(x)\sim C  x^{-\alpha}$ near $0$.  Kim shows that if $(I_n)$ is a sequence of
intervals in $(d,1]$ for some $d>0$ and $\sum_n \mu (I_n)=\infty$ then
  $I_{n}$ is an SBC sequence if (a) $I_{n+1}\subset I_n$ for all $n$
  (nested intervals) or (b) $\alpha < (3 - \sqrt{2}) / 2$. Kim shows
  that the condition $I_n \subset (d,1]$ for some $d>0$ is in some
    sense optimal (with respect to the invariant measure $\mu$) by
    showing that setting $A_n=[0,n^{-1 / (1-\alpha)})$ gives a
      sequence such that $\sum_n \mu (A_n)=\infty$ yet $T_{\alpha}^{n}
      (x)\in A_{n}$ for only finitely many values of $n$ for $\mu$
      a.e.\ $x\in [0,1]$.

For the same class of maps $T_{\alpha}$, Gou\"{e}zel~\cite{Gouezel}
considers Lebesgue measure $m$ (rather than the invariant probability
measure $\mu$) and shows that if $(I_n)$ is a sequence of intervals
such that $\sum_n m (I_{n})=\infty$ then $(I_n)$ is a BC
sequence.  Assumptions (a) or (b) of Kim are not necessary for
Gou\"{e}zel's result.  Gou\"{e}zel uses renewal theory and obtains BC
results but not SBC results.

At the end of this section we give  an example of  an  intermittent type map which preserve Lebesgue measure $m$
yet  for which there exists a  sequence of nested intervals $I_n$, $\sum_n m(I_n)=\infty$ yet $(I_n)$ is not BC. Such 
maps may have arbitrarily high polynomial rate of decay of correlations for H\"older observations.

In the context of flows, Maucourant~\cite{Maucourant} has proved the BC property for nested
balls in the setting of geodesic flows on hyperbolic manifolds of
finite volume.

Recently, Gupta et al.~\cite{GNO} proved the SBC property for sequences of
intervals in the setting of Gibbs--Markov maps and also sequences of
nested balls in one-dimensional maps modeled by Young Towers. They
also gave some applications to extreme value theory of deterministic
systems, in particular the almost sure behavior of successive maxima
of observations on such systems.

Fayad~\cite{Fayad} has given an example of an analytic area preserving
map of the three-dimensional torus which is mixing of all orders, yet for which 
there exists a sequence of nested balls $(A_n) $ such that $E_n$
diverges yet the sequence $(A_n)$  is not BC.

In this short note we establish dynamical Borel--Cantelli lemmas using
elementary arguments, we try to avoid  dynamical assumptions
beyond decay of correlations as much as we can. Our results are phrased in terms of the 
interplay between the measure $\mu (A_n)$ of the sets and the rate of decay of correlations of
observations in various norms. For sets of measure $\mu (A_n) \le \frac{1}{n}$ we mainly restrict to the 
shrinking target problem and  need to make some 
dynamical assumptions  in the guise of return time statistics. We define
$E(\phi):=\int_{X} \phi \, d \mu$ for the expectation of an integrable observation on a
dynamical system $(T,X,\mu)$ where $X$ is a metric and probability
space.

In applications it is common to have decay estimates  for observations on a  dynamical system in various Banach space norms, for example:
\begin{itemize}
  \item[(a)] Bounded variation (BV) versus $L^1$,
    \[
    |E(\phi~\psi\circ T^m)-E(\phi)E(\psi)|\le 
    p(m)\|\phi\|_{BV}~\|\psi\|_{1},
    \]

  \item[(b)] Lipschitz versus $L^{\infty}$,
    \[
    |E(\phi~\psi\circ T^m)-E(\phi)E(\psi)|\le 
    p(m)\|\phi\|_{\mathrm{Lip}}~\|\psi\|_{\infty},
    \]

  \item[(c)] Lipschitz versus Lipschitz,
    \[
    |E(\phi~\psi\circ T^m)-E(\phi)E(\psi)|\le 
    p(m)\|\phi\|_{\mathrm{Lip}}~\|\psi\|_{\mathrm{Lip}},
    \]
\end{itemize}
where $p(m)$ is a rate function which tends to zero in $m$. H\"{o}lder
norms are sometimes considered rather than Lipschitz but it is no
essential loss to only consider Lipschitz.

In fact $BV$ versus $L^1$ is not so common for non-uniformly
hyperbolic systems i.e. those with a stable foliation.  It is implicit in Kim~\cite{Kim} and explicitly
stated in Gupta et al.~\cite{GNO} that summable rate of decay
(i.e.\ $\sum_m p(m)<\infty$) for the norms $BV$ versus $L^1$ implies the
SBC property for any sequence of balls $B_i$ such that $E_n$ diverges.

\begin{prop}[\cite{Kim,GNO}]
  Suppose $(T,X,\mu)$ has summable decay of correlations with respect
  to $BV$ versus $L^1$ in the sense that for $\phi \in BV$, $\psi \in
  L^1 (\mu)$ 
  \[
  |E(\phi~\psi\circ T^m )-E(\phi)~E(\psi) |\le p(m)\|\phi\|_{BV} \|\psi\|_{1}
  \]
  where $\sum_m p(m)<\infty$. 
 If $(B_{i})$ is a sequence of balls in $X$ and $\sum_{i=0}^{\infty}
 \mu (B_{i}) = \infty$ then $(B_i)$ is SBC.
\end{prop}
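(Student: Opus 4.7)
The plan is to reduce the SBC claim to a variance bound of the form $E((S_n - E_n)^2) \le C E_n$, and then invoke a standard Gál--Koksma--Sprindzuk-type strong law. Writing $S_n = \sum_{j=0}^{n-1} 1_{B_j}\circ T^j$ and using $T$-invariance of $\mu$,
\[
E\bigl((S_n-E_n)^2\bigr) = \sum_{i=0}^{n-1}\mu(B_i)(1-\mu(B_i)) + 2\sum_{0\le i<j\le n-1}\mathrm{Cov}_{i,j},
\]
where $\mathrm{Cov}_{i,j} = E(1_{B_i}\cdot(1_{B_j}\circ T^{j-i})) - \mu(B_i)\mu(B_j)$. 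Since the natural setting for a $BV$ vs.\ $L^1$ estimate is one-dimensional (so each ball $B_i$ is an interval), the indicator $1_{B_i}$ has bounded variation with $\|1_{B_i}\|_{BV} \le C$ for a universal constant $C$.

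Applying the hypothesis with $\phi = 1_{B_i}\in BV$ and $\psi = 1_{B_j}\in L^1(\mu)$ yields $|\mathrm{Cov}_{i,j}| \le C\,p(j-i)\,\mu(B_j)$. Fixing $j$ and summing over $i<j$ gives $\sum_i |\mathrm{Cov}_{i,j}| \le C\mu(B_j)\sum_{k\ge1}p(k)$, so by the summability hypothesis $\sum_k p(k) < \infty$,
\[
\sum_{0\le i<j\le n-1} |\mathrm{Cov}_{i,j}| \le C'\,E_n.
\]
Combined with the diagonal, $E((S_n - E_n)^2) \le C''\,E_n$; the same argument applied on a block $[N,M)$ gives $E((S_M - S_N - (E_M - E_N))^2) \le C''(E_M - E_N)$ for all $M>N$.

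This is exactly the hypothesis required by the Gál--Koksma--Sprindzuk strong law, which concludes $S_n(x)/E_n \to 1$ for $\mu$-a.e.\ $x$. For a direct argument, pick a subsequence $n_k$ with $E_{n_k} \sim k^2$; Chebyshev yields $\mu(|S_{n_k} - E_{n_k}| > \varepsilon E_{n_k}) \le C''/(\varepsilon^2 k^2)$, which is summable, so by the Borel--Cantelli lemma $S_{n_k}/E_{n_k}\to 1$ $\mu$-a.e. Since $n\mapsto S_n$ and $n\mapsto E_n$ are both nondecreasing and $E_{n_{k+1}}/E_{n_k}\to 1$, one interpolates to the full sequence.

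I expect the only delicate point to be the uniform bound $\|1_{B_i}\|_{BV}\le C$, which is automatic in the one-dimensional setting where $BV$ vs.\ $L^1$ decay is meaningful, but would need replacement (by an approximation of $1_{B_i}$ by a Lipschitz or $BV$ bump) in more general ambient spaces; everything else is a routine covariance-estimate-plus-Chebyshev argument.
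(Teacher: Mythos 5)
Your proof is correct and takes essentially the same route as the paper, which merely remarks that the result is a straightforward application of Sprindzuk's proposition: your covariance bound $|\mathrm{Cov}_{i,j}|\le C\,p(j-i)\,\mu(B_j)$, summed over blocks, is exactly the verification of Sprindzuk's hypothesis \eqref{eq:sprindzuk} with $g_k=h_k=\mu(B_k)$. The self-contained Chebyshev-plus-subsequence alternative and the caveat that $\|1_{B_i}\|_{BV}\le C$ is automatic only for intervals are both sound but add nothing beyond what the paper intends.
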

The proof of this result is a straightforward application of a
condition for SBC by Sprindzuk~\cite{Sprindzuk} that we will soon
state.

In this paper we will mainly consider decay of correlation of
Lipschitz versus Lipschitz. The
modifications of our results for H\"{o}lder versus H\"{o}lder, Lipschitz versus $L^{\infty}$  or
H\"{o}lder versus $L^{\infty}$ are straightforward.

We will  often use a proposition of Sprindzuk~\cite{Sprindzuk}:

\begin{prop}\label{prop:sprindzuk}
  Let $(\Omega,\mathcal{B},\mu)$ be a probability space and let $f_k
  (\omega) $, $(k=1,2,\ldots )$ be a sequence of non-negative $\mu$
  measurable functions and $g_k$, $h_k$ be sequences of real numbers
  such that $0\le g_k \le h_k \le 1$, $(k=1,2, \ldots,)$.  Suppose there exist $C>0$ such that 
  \begin{equation} \label{eq:sprindzuk}
    \tag{$*$} \int \left(\sum_{m<k\le n}( f_k (\omega) - g_k)
    \right)^2\,d\mu \le C \sum_{m<k \le n} h_k
  \end{equation}
  for arbitrary integers $m <n$. Then for any $\epsilon>0$
  \[
  \sum_{1\le k \le n} f_k (\omega) =\sum_{1\le k\le n} g_k (\omega)  +
  O (\theta^{1/2} (n) \log^{3/2+\epsilon} \theta (n)
  )
  \]
  for $\mu$ a.e.\ $\omega \in \Omega$, where $\theta (n)=\sum_{1\le k
    \le n} h_k$.
\end{prop}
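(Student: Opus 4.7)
The plan is to combine a second-moment (Chebyshev) bound with a Borel--Cantelli argument along a lacunary subsequence, and then fill in the gaps via a Rademacher--Menshov-type maximal inequality. Write $F_N(\omega) = \sum_{k=1}^{N}(f_k(\omega)-g_k)$; the hypothesis $(*)$ with $m=0$ reads $\int F_N^2\,d\mu \le C\,\theta(N)$.

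First I would pick a lacunary subsequence $N_j$ with $\theta(N_j)\asymp 2^j$, e.g.\ $N_j := \min\{N : \theta(N) \ge 2^j\}$. Chebyshev's inequality then gives
\[
\mu\bigl(|F_{N_j}| > \theta(N_j)^{1/2}\log^{(1+\epsilon)/2}\theta(N_j)\bigr) \le \frac{C}{\log^{1+\epsilon}\theta(N_j)} \asymp \frac{1}{j^{1+\epsilon}},
\]
which is summable in $j$. The first Borel--Cantelli lemma therefore yields, for $\mu$-a.e.\ $\omega$, the estimate
\[
|F_{N_j}(\omega)| = O\bigl(\theta(N_j)^{1/2}\log^{(1+\epsilon)/2}\theta(N_j)\bigr)
\]
for all sufficiently large $j$.

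The remaining step is to control $F_N$ for $N_{j-1} < N \le N_j$. Setting $M_j(\omega) := \max_{N_{j-1} < N \le N_j} |F_N(\omega) - F_{N_{j-1}}(\omega)|$, I would decompose each such increment as a telescoping sum of at most $\log_2(N_j - N_{j-1})$ dyadic blocks inside $(N_{j-1},N_j]$ (the Rademacher--Menshov construction). Applying hypothesis $(*)$ to each dyadic sub-block and then Cauchy--Schwarz over the $O(\log\theta(N_j))$ levels of the tree produces the maximal bound
\[
\int M_j^2\,d\mu \le C\,\theta(N_j)\,\log^{2}\theta(N_j).
\]
A second application of Chebyshev and Borel--Cantelli along the same subsequence, now with threshold $\theta(N_j)^{1/2}\log^{3/2+\epsilon/2}\theta(N_j)$, gives $M_j = O(\theta(N_j)^{1/2}\log^{3/2+\epsilon}\theta(N_j))$ almost surely. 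Combining this with the first step and noting that $\theta(N)\asymp\theta(N_j)$ when $N$ lies in the block $(N_{j-1},N_j]$ yields the claimed asymptotic for every $N$.

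The principal obstacle, and the source of the $\log^{3/2+\epsilon}$ factor in the conclusion, is the maximal inequality in the second step: a naive union bound over all $N$ inside the block $(N_{j-1},N_j]$ would lose a factor comparable to the block length, whereas the Menshov binary-tree decomposition loses only the square of a logarithm. Adapting this tree argument to the abstract variance hypothesis $(*)$ (rather than to genuine orthogonality of the increments $f_k-g_k$) is the one place where some care is required, but the variance bound applied separately to each dyadic sub-block is precisely what $(*)$ supplies.
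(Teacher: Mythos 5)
Your architecture (a lacunary subsequence along which $\theta$ doubles, Chebyshev plus Borel--Cantelli at the block endpoints, and a Rademacher--Menshov maximal inequality inside each block) is the classical route to this lemma; the paper itself offers no proof and simply cites Sprindzuk. But there is a genuine gap in your second step. You decompose the increment over $(N_{j-1},N]$ into dyadic blocks of \emph{indices}, so your tree has about $\log_2(N_j-N_{j-1})$ levels, and then you apply Cauchy--Schwarz ``over the $O(\log\theta(N_j))$ levels''. These two counts need not agree: nothing bounds the $h_k$ from below, so a block on which $\theta$ increases by $2^{j-1}$ may contain enormously many indices. For example, if $h_k=1$ when $k=2^{2^i}$ and $h_k=0$ otherwise, then $N_j\approx 2^{2^{2^j}}$ and $\log_2(N_j-N_{j-1})$ is doubly exponential in $j\asymp\log_2\theta(N_j)$. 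What your tree argument actually yields is $\int M_j^2\,d\mu\le C\,\theta(N_j)\log^2(N_j-N_{j-1})$, and the subsequent Chebyshev/Borel--Cantelli step then fails to be summable. (In the applications in this paper one has $\mu(B_i)\ge i^{-\gamma}$, so $\log(N_j-N_{j-1})=O(j)$ and your argument would close; but it does not prove the proposition as stated.)

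The standard repair is to build the binary tree with respect to $\theta$-mass rather than index count: subdivide $(N_{j-1},N_j]$ so that the sub-blocks at level $i$ carry $\theta$-mass about $2^{j-i}$, stopping after $O(j)=O(\log\theta(N_j))$ levels when the atoms have mass $O(1)$. This recovers the $\log^2\theta(N_j)$ loss, but it leaves the oscillation of $F_N$ \emph{inside} a bottom atom $(a,b]$ uncontrolled by $(*)$ alone, since such an atom may still contain arbitrarily many indices. That is exactly where the hypotheses you never invoke must enter: because $f_k\ge 0$ and $0\le g_k\le h_k$ with $\sum_{a<k\le b}h_k=O(1)$, one has
\[
-1\;\le\;-\sum_{a<k\le N}g_k\;\le\;F_N-F_a\;\le\;(F_b-F_a)+\sum_{a<k\le b}g_k\;\le\;(F_b-F_a)+1
\]
for all $a<N\le b$, so the maximum over an atom is controlled by its endpoint values plus $O(1)$. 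Without this monotonicity step your maximal inequality has no substitute for the bottom of the tree, so the positivity of the $f_k$ and the normalization $g_k\le h_k\le 1$ are not cosmetic hypotheses but load-bearing ones that the proof must use.
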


\begin{example}
Cristadoro et.~al.~\cite{CHMV} consider an intermittent map $T$ of the interval $[-1,1]$ (actually on the unit circle
as $1$ and $-1$ are identified) with an unbounded derivative at the origin. The map is implicitly defined by the equation

\[ x = \left\{ \begin{array}{ll}
         \frac{1}{2\gamma} (1+T(x))^{\gamma})^{\gamma} & \mbox{if $0\le x \le \frac{1}{2\gamma} $};\\
        T(x) +\frac{1}{2\gamma} (1-T(x))^{\gamma} & \mbox{if $\frac{1}{2\gamma} \le x\le1$}.\end{array} \right. 
 \] 
 and extended as an odd function so that  $T(-x)=-T(x)$. See Figure 1.

 We assume $\gamma >1$. Let $\tau=\frac{1}{\gamma-1}$ and use the notation $x_n \sim y_n$ to
 denote $\lim_{n\to \infty}\frac{x_n}{y_n}=1$.

 The map preserves Lebesgue measure 
 $m$ and is mixing with  a polynomial rate of decay of correlations for H\"older observations. In fact
 \[
 | \int \phi\circ T^n\psi dm-\int \phi dm \int \psi dm | \le C(\phi,\psi) n^{-\tau}
 \]
 for all H\"older $\phi$, $\psi \in L^{\infty}(m)$.
 
 We  define $T_{+}:=T_{|(0,1)}$, $T_{-}:=T_{I(-1,0)}$, $a_{0-}=\frac{-1}{2\gamma}$, $a_{-i}=T_{-}^{-i}a_{0-}$ and $b_i=T_{+}^{-1}a_{-(i-1)}$.
 Note that $T^{-1} (-1,a_{-n})=(0,b_{n+1})\cup (-1,a_{-n-1})$.  
 
 It is shown in ~\cite[Lemma 2]{CHMV} that $m(-1,a_{-n})\sim ( 2\gamma\tau)^{\tau}n^{-\tau}$, $m(0,b_n)\sim\frac{1}{2\gamma}(2\gamma\tau)^{\gamma\tau}(n-1)^{-\gamma\tau}$.

If we choose $\gamma>2$ then $0<\tau <1$ so $\sum_{n>0}m(-1,a_{-n})$ diverges. Since $m(0,b_n)\sim \frac{1}{2\gamma}(2\gamma\tau)^{\gamma\tau}
(n-1)^{-\gamma\tau}$ and $\gamma\tau>1$, $\sum_{n>0} m(0,b_n)$ converges.  The only way for the orbit of a  generic point under the map $T$
to enter $(-1,a_{-n})$ infinitely often is to enter $(0,b_n)$ infinitely often. Since $\sum_{n>0} m(0,b_n)$ converges, $m$ a.e. satisfies
$T^nx \in (-1,a_{-n})$ for only finitely many $n$, although $\sum_{n>0}m(-1,a_{-n})$ diverges.This is an example of a sequence of 
nested sets $A_n:= (-1,a_{-n})$ such that $\sum_{n>0}m(A_n)$ diverges yet $T^n x\in A_n$ at most finitely many times for $m$ a.e. $x$. In contradiction
to the example of Kim, the map $T$ preserves Lebesgue measure rather than a measure with an unbounded density. 
Note also  that taking $\gamma-1:=\delta>0$ the rate of decay of correlation is $Cn^{-\frac{1}{\delta}}$ is at an  arbitrarily high polynomial rate.

\end{example}

\setlength{\unitlength}{0.5\textwidth}

\begin{figure}
  \begin{minipage}{0.5\textwidth}
    \begin{center}
      \includegraphics[width=\unitlength]{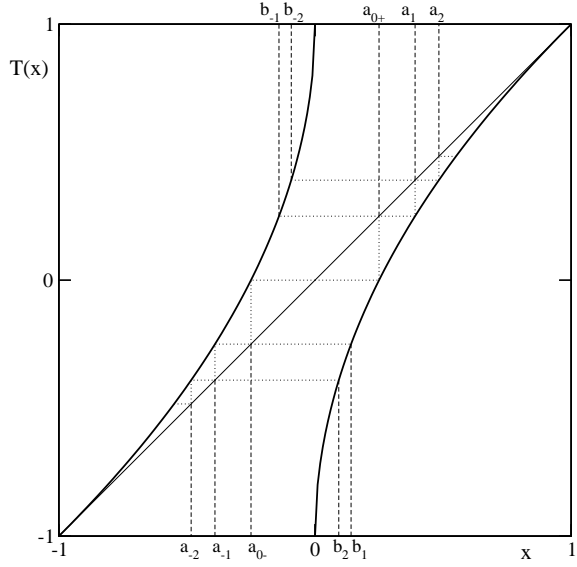}
      \caption{The graph of $T$.}
      \label{fig:map}

    \end{center}
  \end{minipage}
 \end{figure}

\section{Assumptions}

Suppose $(T,X,\mu)$ is an ergodic measure preserving map of a
probability space $X$ which is also a metric space.  We assume:
\begin{itemize}
  \item[(A)] For all Lipschitz functions $\phi,\psi$ on $X$ we have
    summable decay of correlations in the sense that there exists
    $C>0$, and a rate $p(k)\rightarrow \infty$, $\sum_k p(k)<\infty$
    (both independent of $\phi,\psi$) such that
    \[
    |E( \phi~\psi\circ T^k) -E(\phi) E(\psi) | <  p(k) \| \phi \|_{\mathrm{Lip}} \| \psi \|_{\mathrm{Lip}}.
    \]

    \item[(B)] There exists $r_0>0$, $0<\delta <1$ such that for all
      $p\in X$ and $\epsilon< r \le r_0 $
      \[
      \mu\{ x: r< d(x,p) < r+\epsilon \} < \epsilon^{\delta}. 
      \]
\end{itemize}
\begin{remark}\label{remark:delta}
If the balls $B_i=B(p,r_i)$, ($r_i \to 0$), are nested balls centered at a point $p$  then we would only require there exist $\delta (p)>0$, $r_0(p)>0$ such that 
$ \mu\{ x: r< d(x,p) < r+\epsilon \} < \epsilon^{\delta (p)}$ for all $\epsilon< r \le r_0 $.

\end{remark}

\begin{remark}\label{rectangles}
We will call the sets $B_i$ `balls', but our results extend to any shapes such that the indicator function of the 
set may be approximated closely in the $L^1$ norm by a Lipschitz function of reasonable Lipschitz norm, for example 
our results extend immediately to rectangles of bounded side ratio.

\end{remark} 

If $p(k)\le C \alpha^k$ for some  constants $C>0$, $0<\alpha<1$ then we say $T$ has
exponential decay of correlations. We will also consider polynomial
decay, where $p(k)\le C k^{-m}$ for some constants $C>0$, $m >0$.

Our results will be formulated in terms of the measure of  the balls $B_i$ and the 
rate of decay of correlation. For balls $B_i$ such that $\mu (B_i)=\frac{1}{i}$ we make additional assumptions 
on return time distributions and short return times, detailed in later sections. 

\section{Sequences of sets  $(B_i)$ such that  $\mu (B_i)\ge i^{-\gamma}$ for $0<\gamma<1$.}

Define
\[
S_n (x)=\sum_{i=0}^{n-1} {1_{B_k}}\circ T^k (x)
\]
and 
\[
E_n=\sum_{1\le k\le n} \mu (B_k).
\]

\begin{thm}\label{thm:large}
Let $   ~0<\gamma<1$ and $\mu (B_i)\ge i^{-\gamma}$. Suppose $(T,X,\mu)$
satisfies (A) and (B), with $p(k) \leq C k^{-q}$. Then if $q>
\frac{2/\delta+\gamma+1}{1-\gamma}$
\[
\lim_{n\rightarrow \infty} \frac{S_n (x)}{E_n}=1
\]
for $\mu$ a.e.\ $x \in X$.
\end{thm}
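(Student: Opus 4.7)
I would apply Sprindzuk's Proposition~\ref{prop:sprindzuk} with $f_k=1_{B_k}\circ T^k$ and $g_k=\mu(B_k)$. The claim $S_n/E_n\to 1$ then reduces to two tasks: verify the variance estimate $(*)$ on every window $(m,n]$ for some $h_k\ge g_k$, and check that the resulting Sprindzuk error $O(\theta(n)^{1/2}\log^{3/2+\epsilon}\theta(n))$ is $o(E_n)$. Since $\mu(B_i)\ge i^{-\gamma}$ only guarantees $E_n\gtrsim n^{1-\gamma}$, the whole point is to make $\sum_{m<k\le n}h_k$ as small as the variance will allow.

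Expanding the square in $(*)$, the diagonal contribution is at most $E_n$ and is harmless, while the off-diagonal sum is $2\sum_{m<j<k\le n}\mathrm{cov}(j,k)$ with $\mathrm{cov}(j,k)=\mu(B_j\cap T^{-(k-j)}B_k)-\mu(B_j)\mu(B_k)$. To estimate each pair I would sandwich $1_{B_j}$ between Lipschitz approximants $\phi_j^{\pm}$ supported in balls of radius $r_j\pm\epsilon$ and equal to $1$ on the smaller one, with $\|\phi_j^{\pm}\|_\infty\le 1$ and $\|\phi_j^{\pm}\|_{\mathrm{Lip}}\le 1/\epsilon$; hypothesis (B) controls the annular mass and yields $\|\phi_j^{\pm}-1_{B_j}\|_1\le C\epsilon^{\delta}$. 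Sandwiching $\int 1_{B_j}\cdot 1_{B_k}\circ T^{k-j}\,d\mu$ between $\int\phi_j^{\pm}\phi_k^{\pm}\circ T^{k-j}\,d\mu$, applying (A) to the smooth factors, and using $|\int\phi_j^{\pm}\,d\mu-\mu(B_j)|\le\epsilon^{\delta}$ yields
\[
|\mathrm{cov}(j,k)|\le C\epsilon^{\delta}(\mu(B_j)+\mu(B_k))+C\epsilon^{2\delta}+\frac{Cp(k-j)}{\epsilon^2}.
\]
Optimizing $\epsilon$ pair-by-pair, and comparing with the trivial bound $|\mathrm{cov}(j,k)|\le\min(\mu(B_j),\mu(B_k))$ for short time lags, produces a covariance estimate involving $p(k-j)^{\delta/(\delta+2)}=C(k-j)^{-q\delta/(\delta+2)}$ sharpened by a power of $\mu(B_j)+\mu(B_k)$.

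Summing this sharpened estimate over $m<j<k\le n$, split into a short-range regime (where the trivial bound is used) and a long-range regime (where the decay-based bound dominates), should collapse the off-diagonal contribution to $O(n^{1-\sigma})$ for some $\sigma>0$ determined by $q,\delta,\gamma$. One can then take $h_k=\max(\mu(B_k),k^{-\sigma})$, which satisfies $h_k\ge g_k$ and for which $\sum_{m<k\le n}h_k\lesssim n^{1-\sigma}$, so Sprindzuk's conclusion becomes $S_n=E_n+O(n^{(1-\sigma)/2}\log^{3/2+\epsilon}n)$. Imposing $(1-\sigma)/2<1-\gamma$ (needed to make the error $o(n^{1-\gamma})$) should, after the arithmetic, collapse to the stated polynomial threshold $q>(2/\delta+\gamma+1)/(1-\gamma)$. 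The main obstacle is precisely this pair-by-pair balancing together with the short/long time-lag split: the covariance sum must be arranged so that $(*)$ holds \emph{uniformly} for every interval $(m,n]$, not only for $m=0$, and the approximation error $\epsilon^{\delta}$, the decay factor $p(k-j)/\epsilon^2$, and the mass factor $\mu(B_j)+\mu(B_k)$ must all be traded off against one another to land on the sharpest admissible $\sigma$. This bookkeeping, rather than any conceptual difficulty, is what pins down the explicit rate on $q$.
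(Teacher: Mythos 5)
Your overall architecture coincides with the paper's: Sprindzuk's proposition, Lipschitz mollification of the indicators controlled via hypothesis (B), a split of each row of the covariance sum into a short time-lag block (estimated trivially by the measure of the balls) and a long time-lag block (estimated by decay of correlations). The paper differs only cosmetically in that it feeds the mollified functions $\tilde f_k$ themselves into Sprindzuk (with $g_k=E(\tilde f_k)$) and transfers back to the indicators at the end by a first Borel--Cantelli argument on the annuli, rather than sandwiching the indicators inside each covariance as you do.

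There is, however, a concrete gap in the step ``optimizing $\epsilon$ pair-by-pair.'' Balancing the approximation error against the decay factor $p(k-j)/\epsilon^2$ produces an $\epsilon$ depending only on the lag $k-j$, and hence a covariance bound of the form $C(k-j)^{-q\delta/(\delta+2)}$ that does \emph{not} decay in the index $j$. Summing it over $k>j$ then gives a constant per row, so the best admissible choice in Sprindzuk is $h_k\asymp 1$, $\theta(n)\asymp n$, and the error $O(n^{1/2}\log^{3/2+\epsilon}n)$ is $o(E_n)=o(n^{1-\gamma})$ only when $\gamma<1/2$; your claimed $h_k=\max(\mu(B_k),k^{-\sigma})$ with $\sigma>0$ is not delivered by this optimization. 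Moreover the terms $\epsilon^{2\delta}$ and $\epsilon^{\delta}(\mu(B_j)+\mu(B_k))$, which carry no factor of $p(k-j)$, must be summable over the roughly $n^2$ pairs in every window $(m,n]$; a lag-dependent $\epsilon$ does not achieve this either. The fix --- and the actual source of the $2/\delta$ in the threshold --- is to tie the mollification scale to the index rather than to the lag, e.g.\ $\epsilon_k=(k\log^2k)^{-1/\delta}$ so that $\epsilon_k^{\delta}$ is summable; this forces $\|\tilde f_k\|_{\mathrm{Lip}}\asymp k^{1/\delta}$, whence the long-range block is bounded by $\sum_{\beta}C\bigl[(i+i^{\sigma}+\beta)i\bigr]^{1/\delta+o(1)}(i^{\sigma}+\beta)^{-q}\le Ci^{-\gamma}$ provided $q\sigma>2/\delta+\gamma+1$, while the short-range block requires $\sigma<1-\gamma$; eliminating $\sigma$ yields exactly $q>(2/\delta+\gamma+1)/(1-\gamma)$. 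Your framework accommodates this correction, but as written the optimization would not close.
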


\begin{remark}

The proof of Theorem~\ref{thm:large} gives the following asymptotic bounds. 
\[
 \sum_{i=0}^{n-1} {1_{B_k}}\circ T^k (x)\approx \sum_{1\le k\le n} \mu
 (B_k) + O (\theta^{1/2+\epsilon} (n) \log^{3/2+\epsilon} \theta (n) )
\]
for $\mu$ a.e.\ $x \in X$, where $\theta (n)=\sum_{1\le k \le n} \mu (B_k)$.

\end{remark}

\begin{remark}
  We note that if $\mu (B_i )=\frac{1}{i}$ then no degree of polynomial decay in the Lipschitz norm
  ensures  that the sequence $(B_i )$ is BC, even
  if the $B_i$ are nested balls.  To see this consider the
  intermittent map example of Kim~\cite{Kim}, $T_{\alpha} \colon
  [0,1]\rightarrow [0,1]$ with the balls $B_i :=[0,i^{-1 /
      (1-\alpha)})$ so that $\mu (B_i)\equiv \frac{1}{i}$. The density
    $h(x)=\frac{d\mu}{dm}$ behaves like $x^{-\alpha}$ near $0$ and the
    rate of decay of correlations with respect to Lipschitz versus
    $L^{\infty}$ functions is $n^{1-\frac{1}{\alpha}}$. The constant
    $\delta$ in the statement of the theorem (from Property~$(B)$) may
    be taken as $1-\alpha$ by observing $\int_0^{\epsilon} x^{-\alpha}
    \, dx\le C\epsilon^{1-\alpha}$. As $\alpha\to 0$, $\delta
    (\alpha)\rightarrow 1$ and $q(\alpha)\rightarrow \infty$ yet for
    each $0<\alpha <1$, the sequence $B_i=[0,i^{-1 / (1-\alpha)})$ is
      not BC.
\end{remark}

\begin{proof}
For each $k$ let $\tilde{f}_k$ be a Lipschitz function such that
$\tilde{f}_k (x)=1_{B_k} (x)$ if $x\in B_k$, $\tilde{f}_k (x)=0$ if
$d(B_k, x)>(k(\log k)^2)^{-1/\delta}$, $0\le \tilde{f}_k \le 1$ and
$\|\tilde{f}_k\|_{\mathrm{Lip}} \le (k(\log k)^{2})^{1/\delta}$.  Clearly we
may construct such functions by linear interpolation of $1_{B_k}$ and
$0$ on a region $r \le d(p_k, x)\le r+ (k(\log k)^2)^{-1/\delta}$.

In the proposition above we will take $f_k =\tilde{f}_k\circ T^k (x)$,
$g_k=E(\tilde{f}_k)$, where $E(\phi)=\int \phi \, d\mu$ for any
integrable function $\phi \in L^1 (\mu)$. The constants $h_k$ will be
chosen later.

Note that for $\mu$ a.e.\ $x\in \Omega$, $f_i (x) = 1_{B_i} (T^i x)$
except for finitely many $i$ by the Borel--Cantelli lemma as $\mu (x:
f_i (x) \not = 1_{B_i} (T^i x))= \mu (x: r_i< d (T^i x, p_i) < r_i+
(i(\log i)^2)^{-1/\delta}) < (i(\log i)^2)^{-1}$ by assumption (B).
Furthermore $\sum_k \mu (B_k)=\sum_k g_k +O(1)$.

A rearrangement of terms in \eqref{eq:sprindzuk} means that it
suffices to show that there exists a $C>0$ such that
\[
\sum_{i=m}^n \sum_{j=i+1}^n E(f_i f_j)-E(f_i)E(f_j) \le C \sum_{i=m}^n h_i
\]
for arbitrary integers $n>m$ (where $h_i$ will be chosen later).

We split each sum $\sum_{j=i+1}^n E(f_i f_j)-E(f_i)E(f_j)$ into the terms 
$$
 I_i=\sum_{j=i+1}^{i+\Delta} E(f_i f_j)-E(f_i)E(f_j)
$$
and 
$$
II_i=\sum_{j=i+\Delta+1}^{n} E(f_i f_j)-E(f_i)E(f_j)
$$
where here we put $\Delta=[i^\sigma]$ with the 
 value of $0<\sigma<1$ is to be chosen later. 

The first term $I_i$ is roughly estimated by
$$
 \sum_{j=i+1}^{i+i^{\sigma}} E(f_i
f_j)-E(f_i)E(f_j)\le \sum_{j=i+1}^{i+i^{\sigma}} E( f_i f_j)\le
\sum_{j=i+1}^{i+i^{\sigma}} \frac{2}{j^{\gamma}}\le i^\sigma\frac2{i^\gamma}.
$$

 
The second term $II_i$ we bound using the decay of correlations
\begin{eqnarray*}
II_i&=&  \sum_{j=i+i^{\sigma}}^{\infty} E(f_i f_j) - E(f_i)E(f_j) \\
&=&\sum_{j=i+i^{\sigma}}^{\infty} \int_X f_i (T^i x) f_j (T^j x) \, d\mu
  - E(f_i) E (f_j)\\
  &=& \sum_{j=i+i^{\sigma}}^{\infty} \int_X f_i ( x)
  f_j (T^{j-i} x) \, d\mu - E(f_i) E (f_j)\\
  &\le& \sum_{j=i+i^{\sigma}}^{\infty}  \|f_i\|_{\mathrm{Lip}} \| f_j \|_{\mathrm{Lip}} p(j-i) \\
  &\le& \sum_{\beta=1}^{\infty}C [(i+i^{\sigma} +\beta )(\log(i+i^{\sigma}
    +\beta ))^2]^{1/\delta} [i(\log i)^2]^{1/\delta}
  (i^{\sigma}+\beta)^{-q}.
\end{eqnarray*}
If $\sigma <1-\gamma$ then $(I_i)$ is bounded by $C i^{1-2\gamma-\rho}$
for some $\rho>0$. If $q\sigma>\frac{2}{\delta}+\gamma +1$ then $(II_i)$
is bounded by ${\mathrm{const.}} i^{-\gamma}$. Solving for $\sigma<1-\gamma$ and then for $q$ we see  that if
$q> \frac{2/\delta+\gamma+1}{1-\gamma}$ then $\sum_{j=i+1} \int E(f_i
f_j) -E(f_i) E(f_j) \le C \max \{ i^{ 1-2\gamma-\rho},i^{-\gamma} \}$.


In Sprindzuk's theorem we now take $h_i=C \max \{ i^{ 1-2\gamma-\rho},i^{-\gamma} \}$. With this
choice of $h_i$ we have $\theta (n)^{1/2} \le C \max \{
n^{(1-\gamma)/2},n^{1-\gamma-\rho}\}$.  This gives the Strong Borel--Cantelli
property  as the error term $O (\theta^{1/2} (n)
\log^{3/2+\epsilon} \theta (n) )$ is negligible with respect to
$\sum_{1\le k\le n} g_k=\sum_{1\le k\le n}E(f_k) \simeq C n^{1-\gamma}$.
\end{proof}



\section{Return time distributions and sequences $(B_i)$ such that  $\mu (B_i)\ge \frac{\log i}{i} $}

We now consider  the case $\mu (B_i) \ge \frac{\log i}{i} $.  In this case we assume exponential decay of correlations. 

\begin{thm}\label{thm:log}
Suppose $\mu (B_i)\ge \frac{\log i}{i}$ and $(T,X,\mu)$ satisfies (A) and (B). 
Then if $(T,X,\mu)$ has exponential decay of correlations  
\[
\lim_{n\rightarrow \infty} \frac{S_n (x)}{E_n}=1
\]
for $\mu$ a.e.\ $x \in X$.
\end{thm}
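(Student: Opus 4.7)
The plan is to mimic the proof of Theorem~\ref{thm:large} via Proposition~\ref{prop:sprindzuk}, exploiting exponential (rather than merely polynomial) decay of correlations to afford a logarithmic gap between the near-diagonal and far-diagonal contributions to the correlation sum. Two features distinguish this case from that of Theorem~\ref{thm:large}: I would take the separating window to be $K\log i$ rather than $i^\sigma$, and I would estimate the near-diagonal terms directly by $\mu(B_j)$, since the hypothesis now permits $\mu(B_i)$ to be of constant order.

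As in the proof of Theorem~\ref{thm:large}, I would first choose Lipschitz approximations $\tilde f_k$ to $1_{B_k}$ supported in a $k^{-2/\delta}$-neighborhood of $B_k$, with $0\le\tilde f_k\le 1$ and $\|\tilde f_k\|_{\mathrm{Lip}}\le k^{2/\delta}$. Assumption~(B) then gives $\mu\{\tilde f_k\ne 1_{B_k}\}\le k^{-2}$, so by the classical Borel--Cantelli lemma, $f_k:=\tilde f_k\circ T^k$ agrees with $1_{B_k}\circ T^k$ for all but finitely many $k$, $\mu$-a.e. With $g_k:=E(\tilde f_k)$, the hypothesis $\mu(B_k)\ge\log k/k\gg k^{-2}$ gives $g_k=\mu(B_k)+O(k^{-2})$ and $\sum_{k\le n}g_k\sim E_n$.

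To verify \eqref{eq:sprindzuk}, I would fix $K$ so that $K|\log\alpha|>4/\delta+2$ (where $p(k)\le C\alpha^k$) and split
\[
\sum_{j=i+1}^n \bigl(E(f_if_j)-E(f_i)E(f_j)\bigr) = I_i + II_i,
\]
with $I_i$ over $j\le i+\floor{K\log i}$ and $II_i$ over the complementary range. The near-diagonal sum $I_i$ is handled by the crude bound $E(f_if_j)\le E(f_j)\le 2\mu(B_j)$ (the factor $2$ absorbing $\epsilon_j^\delta=j^{-2}\le\mu(B_j)$); interchanging the order of summation then yields $\sum_i I_i\le 2K\sum_j(\log j)\mu(B_j)$. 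The far-diagonal sum $II_i$ is handled by assumption~(A), which gives
\[
|E(f_if_j)-E(f_i)E(f_j)|\le C\alpha^{j-i}(ij)^{2/\delta};
\]
summing over $j-i>K\log i$ is dominated by $Ci^{4/\delta}\alpha^{K\log i}=Ci^{4/\delta-K|\log\alpha|}\le Ci^{-2}$, so $\sum_iII_i=O(1)$.

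Setting $h_k:=C\bigl((\log k)\mu(B_k)+k^{-2}\bigr)$ then gives $\theta(n)=\sum_{k\le n}h_k=O((\log n)E_n)$, and Proposition~\ref{prop:sprindzuk} yields $S_n(x)=E_n+O\bigl(\theta(n)^{1/2}\log^{3/2+\epsilon}\theta(n)\bigr)$ almost surely. The main obstacle, as I see it, is verifying that this error is genuinely $o(E_n)$ in the tightest case permitted by the hypothesis. Since $\mu(B_i)\ge\log i/i$ forces $E_n\ge c(\log n)^2$, the ratio of error to $E_n$ is at most
\[
C\sqrt{(\log n)/E_n}\,(\log E_n+\log\log n)^{3/2+\epsilon},
\]
which at the boundary $E_n\asymp(\log n)^2$ reduces to $(\log\log n)^{3/2+\epsilon}/(\log n)^{1/2}\to 0$, and only decays faster as $E_n$ grows. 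This is precisely where the logarithmic factor in the hypothesis $\mu(B_i)\ge\log i/i$ (compared with $\mu(B_i)\ge 1/i$) is used, in accord with the remark following Theorem~\ref{thm:large}.
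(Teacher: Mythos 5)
Your proposal is correct and follows essentially the same route as the paper: Lipschitz approximation of the indicators, Sprindzuk's proposition, and a near/far-diagonal split in which exponential decay of correlations makes everything beyond a logarithmic window summable. The paper takes the window to be $(\log i)^{\sigma}$ with $1<\sigma<2$ (so that no constant $K$ depending on $\alpha$ and $\delta$ need be chosen) and estimates $I_i$ directly in the boundary case $\mu(B_j)=\frac{\log j}{j}$, but these are cosmetic differences; your interchange of the order of summation in $\sum_i I_i$ and your final comparison of the error term against $E_n\gtrsim (\log n)^2$ correspond exactly to the paper's choice $h_k=(\log k)^{\sigma+1}/k$ and its observation that $\theta(n)\sim(\log n)^{\sigma+2}$ is negligible relative to $E_n\sim\frac12(\log n)^2$.
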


\begin{proof}
Let $p(k)\le C \alpha^k$ for some $0<\alpha <1$.
In  this setting we again split  $\sum_{j=i+1}^n E(f_i f_j)-E(f_i)E(f_j)$  as above
in two sums $I_i$ and $II_i$ where this time $\Delta=[(\log i)^\sigma]$, where $\sigma >1$
will be chosen below. We obtain
$$
I_i=\sum_{j=i+1}^{i+(\log i)^{\sigma}} \frac{\log j }{j}\le(\log i)^\sigma\frac{\log i}{i}=\frac{(\log i)^{\sigma+1}}i
$$
and, using the decay of correlations,
\begin{eqnarray*}
 II_i&=& \sum_{j=i+(\log i)^{\sigma}}^{N} \int_X f_i (T^i x) f_j (T^j x) \,
  d\mu - E(f_i) E (f_j) \\ &\le& C \sum_{\beta=1}^{\infty} \alpha^{(\log
    i)^{\sigma}+\beta} ((\log i)^{\sigma}+i
  +\beta)^{2/\delta}i^{2/\delta}.
  \end{eqnarray*}
As $\sigma>1$ for large enough $i$ the series above is bounded by $\frac{\log
  i}{i}$. We take $1<\sigma<2$ and  $h_k=(\log k)^{\sigma+1}/k$. Since  $\sigma <2$
the error $O (\theta (n)^{1/2} \log^{3/2 +\epsilon} \theta (n))$ is negligible
with respect to $\sum_{k=1}^n g_k\sim \sum_{k=1}^n\frac{\log k}k\sim\frac12(\log n)^2$
as $\theta(n)=\sum_{k=1}^nh_k\sim\sum_{k=1}^n\frac{(\log k)^{\sigma+1}}k\sim(\log n)^{\sigma+2}$.
\end{proof}

\subsection{Applications of Theorem~\ref{thm:large} and Theorem~\ref{thm:log}} 

We now list some dynamical systems which satisfy assumptions $(A)$ and $(B)$. Recall that 
although we have called the sets $B_i$ balls,  any sets $B_i$ which are geometrically regular in the sense
that their indicator function may be approximated in the $L^1$ norm by a Lipschitz function of reasonable Lipschitz norm
(for example rectangles)
also satisfy the conclusions of Theorem~\ref{thm:large} and Theorem~\ref{thm:log}.

\noindent {\it Dispersing billiard systems:}  satisfy assumption (B) (as the invariant measure
is equivalent to Lebesgue) and hence our results apply to these
systems if they have sufficiently high rates of decay of
correlations. The class of dispersing billiards considered by
Young~\cite{Young} and Chernov~\cite{Chernov} have exponential decay
of correlations.

\noindent {\it Lozi maps:} it is shown in Gupta et al.~\cite{GHN} that
assumption (B) is satisfied by a broad class of Lozi mappings. Lozi
mappings have exponential decay of correlations.

\noindent {\it Compact group extensions of Anosov systems:}
Dolgopyat~\cite{Dolgopyat} has shown that compact group extensions of
Anosov diffeomorphisms are typically rapid mixing i.e.\ have
superpolynomial decay of correlations. These systems satisfy also
Assumption B as they are volume preserving.

\noindent {\it Interval maps:} one-dimensional non-uniformly expanding
maps with an absolutely continuous invariant probabiility measure
$\mu$ and density $h = \frac{d\mu}{dm}\in L^{1+\delta} (m)$ for some
$\delta>0$ satisfy condition (A) and (B). For example the class of
maps considered by Collet~\cite{Collet}.

\section{Sequences of balls $(B_i)$ such that $\mu (B_i) \le i^{-1}$ and  $\sum_i \mu (B_i)=\infty$. }

If the measure of the balls $B_i$ satisfies $\mu (B_i)\le i^{-1}$ then
our arguments based solely on decay of correlations break down and to
obtain results we make more assumptions- in particular we focus on the
case of nested balls $B_{i+1}\subset B_i$ and make assumptions on
return time statistics.

In this section we focus on the shrinking target problem in which $B_i
(p) =B(p,r_i)$ the ball of radius $r_i$ about $p$ about a point $p\in
X$ such that $\mu (B(p,r_i))=i^{-1}$. Our proof and results generalize
with no change to the setting where there exists a constant $C>0$ such
that $r_{i+1}<Cr_i$ and $\mu (B(p,r_i))\ge \frac{C}{i}$.

We first observe, as remarked in Fayad~\cite{Fayad}, that the set of
points $G$ such that $T^i x \in B_i (p) $ for infinitely many $i$ has
measure zero or one. This follows as $T^{i+1} x\in B_{i+1} (p)$
implies that $T^{i} (Tx) \in B_{i} (p)$ and hence $G$ is $T$
invariant. If $(T,X,\mu)$ is ergodic this implies that $\mu (G)=0$ or
$\mu (G)=1$.

\subsection{Return time distributions.} 

We now show how a return time distribution implies Borel--Cantelli
lemmas in certain settings.  Fix $p\in X$ and set $B_i=B(p,r_i)$ where
$r_i\to 0$ so that $B_i$ is a sequence of nested balls with center
$p$. Let $\tau_{B_i}$ be a random variable defined on the set
$B_i=B(p,r_i)$ which gives the first return time i.e.\ $\tau_{B_i} (x)
=\inf \{ n\ge 1: T^n (x) \in B_i\}$.  We define the conditional
probability measure $\mu_{B_r}$ on $B_r$ by $\mu_{B_r} (A);=\frac{\mu
  (A\cap B_r)}{\mu (B_r)}$.

For many dynamical systems distributional limit laws for return time
statistics have been proven, of the form: for $\mu$ a.e.\ $p$, if $B_r
=B(p,r)$ for $r>0$ then
\[
\lim_{r\to 0} \mu_{B_r} \{ y\in B_r : \tau_{B_r} \mu (B_r) < t\} =F(t) 
\]
where $F(t)$ is a distribution function.  Commonly $F(t)=1-e^{-t}$, an
exponential law.

Let $S_n =\sum_{j=0}^{n-1} 1_{B_j}$, $E_n=\sum_{j=0}^{n-1} \mu (B_j)$
with $\lim_{n\to \infty} E_n=\infty$.  We now state a simple lemma.

\begin{lemma}\label{lemma:seq}
  Suppose that $B_i$ is a sequence of balls (not necessarily nested).
  If $E (S_n-E_n)^2 \le g(n)( E_n)^2$ for a sequence $g(n)$ such that
  $\lim_{n\to \infty} g(n)=0$ then $\limsup \frac{S_n (x)}{E_n}\ge 1$
  for $\mu$ a.e.\ $x\in X$.
\end{lemma}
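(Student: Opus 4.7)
The plan is to use Chebyshev's inequality together with a subsequence argument based on the classical Borel--Cantelli lemma. The hypothesis is exactly the second-moment estimate needed to deduce that $S_n/E_n$ converges to $1$ in $\mu$-measure; extracting a subsequence along which convergence is almost sure will then yield the desired $\limsup$ lower bound.

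Concretely, first I would observe that for any $\epsilon > 0$, Chebyshev's inequality applied to the hypothesis gives
\[
\mu\!\left(\left\{ x : \left| \frac{S_n(x)}{E_n} - 1 \right| > \epsilon \right\}\right) \le \frac{E(S_n - E_n)^2}{\epsilon^2 E_n^2} \le \frac{g(n)}{\epsilon^2}.
\]
Since $g(n) \to 0$, this shows $S_n/E_n \to 1$ in $\mu$-measure. Next, I would choose a subsequence $(n_k)$ with $g(n_k) \le 2^{-k}$ (possible because $g(n) \to 0$). Then for any fixed $\epsilon > 0$, the measures $\mu\{|S_{n_k}/E_{n_k} - 1| > \epsilon\}$ are summable in $k$, so by the classical (easy direction of the) Borel--Cantelli lemma
\[
\mu\!\left(\left\{ x : \left| \frac{S_{n_k}(x)}{E_{n_k}} - 1 \right| > \epsilon \text{ for infinitely many } k \right\}\right) = 0.
\]

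Applying this to a countable sequence $\epsilon = 1/m$ and taking a countable union of null sets, I conclude that for $\mu$ a.e.\ $x$ one has $S_{n_k}(x)/E_{n_k} \to 1$ as $k \to \infty$. Since the full limsup dominates any subsequential limit,
\[
\limsup_{n \to \infty} \frac{S_n(x)}{E_n} \ge \lim_{k \to \infty} \frac{S_{n_k}(x)}{E_{n_k}} = 1
\]
for $\mu$ a.e.\ $x \in X$, which is the claimed conclusion.

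There is no real obstacle here; the only mild subtlety is that a raw Chebyshev bound by itself does not give an a.s.\ statement for the full sequence $n$, which is why the extraction of a subsequence with summable tail probabilities is necessary. Note that the argument does not yield a matching $\liminf \le 1$ bound (indeed one would not expect it from this lemma, since the statement only asserts $\limsup \ge 1$), so no further care is required.
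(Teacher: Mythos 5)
Your proof is correct and follows essentially the same route as the paper: Chebyshev's inequality gives $\mu\{|S_n/E_n-1|>\epsilon\}\le g(n)/\epsilon^2$, one extracts a subsequence $(n_k)$ with $\sum_k g(n_k)<\infty$, applies the easy Borel--Cantelli lemma, and intersects over $\epsilon_m=1/m$ to get $S_{n_k}/E_{n_k}\to 1$ almost surely, whence $\limsup S_n/E_n\ge 1$. The only difference is cosmetic: you make the subsequence choice explicit via $g(n_k)\le 2^{-k}$, whereas the paper simply requires summability.
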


\begin{proof}
  The assumptions imply that $E [(\frac{S_n}{E_n}-1)^2]\le g(n)$.
  Hence $E[ |\frac{S_n}{E_n}-1|>\epsilon]\le \frac{E
    (S_n-E_n)^2}{\epsilon^2} \le \frac{g(n)}{\epsilon^2}$ by
  Chebyshev's inequality.  If we take a subsequence $n_k$ such that
  $\sum_k g (n_k) <\infty$, then by Borel--Cantelli for $\mu$
  a.e.\ $x\in X$, $|\frac{S_n(x)}{E_n}-1|>\epsilon$ for only finitely
  many $n_k$. Let $G_{\epsilon}=\{x:
  |\frac{S_n(x)}{E_n}-1|>\epsilon~i.~o.\}$, then $\mu
  (G^c_{\epsilon})=1$.  Taking a countable sequence
  $\epsilon_m=\frac{1}{m}$ and noting $\mu (\cap_m G^c_{\epsilon_m})=1$ implies the
  result.
\end{proof}

In fact if the $B_i$ are nested and $(T,X,\mu)$ is ergodic then the
assumptions of the lemma above may be weakened to
\begin{lemma}\label{lemma:nest}
  Suppose that $B_i$ is a nested sequence of balls and $(T,X,\mu)$ is
  ergodic.  If $E (S_n-E_n)^2 < \eta ( E_n)^2$ for some $\eta <1$ then
  $T^n (x) \in B_n$ infinitely often  for $\mu$ a.e.\ $x\in X$.
\end{lemma}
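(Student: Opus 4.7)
The plan is to leverage the dichotomy $\mu(G)\in\{0,1\}$ for $G=\{x:T^nx\in B_n\text{ i.o.}\}$ that is noted just above the statement (and which uses only the nestedness $B_{i+1}\subset B_i$ together with ergodicity, via the inclusion $G\subset T^{-1}G$). Since the measure is already known to be $0$ or $1$, the whole task reduces to showing that $\mu(G)$ is strictly positive, which I will do by a second moment / Paley--Zygmund argument driven by the hypothesis $E(S_n-E_n)^2<\eta E_n^2$.

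First I would rewrite the second moment in a convenient form. Because $T$ preserves $\mu$, one has $E(S_n)=\sum_{k=0}^{n-1}\mu(B_k)=E_n$, so
\[
E(S_n^2)=E(S_n-E_n)^2+2E_n\,E(S_n)-E_n^2<(1+\eta)E_n^2.
\]
Then I would apply the Paley--Zygmund inequality to the non-negative random variable $S_n$ with threshold $\tfrac12E(S_n)$:
\[
\mu\bigl\{S_n>\tfrac12 E_n\bigr\}\ge\tfrac14\,\frac{(E S_n)^2}{E(S_n^2)}\ge\frac{1}{4(1+\eta)}>0.
\]
Thus the measure of the event $\{S_n>E_n/2\}$ is bounded below by a positive constant independent of $n$.

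Next I would pass from this uniform lower bound to a statement about $\limsup$. By the Fatou lemma applied to the indicator functions of the sets $A_n=\{S_n>E_n/2\}$,
\[
\mu\bigl(\limsup_n A_n\bigr)\ge\limsup_n\mu(A_n)\ge\frac{1}{4(1+\eta)}.
\]
Since $S_n$ is monotone non-decreasing in $n$ and $E_n\to\infty$, the event $\limsup_n A_n$ is contained in $\{\lim_n S_n=\infty\}=G$. Hence $\mu(G)\ge 1/(4(1+\eta))>0$, and the ergodic dichotomy forces $\mu(G)=1$, which is the desired conclusion.

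The main thing to be careful about is that nothing in the argument uses an independent Borel--Cantelli step along a subsequence (as in Lemma~\ref{lemma:seq}); I only need a \emph{single} uniform lower bound on $\mu(A_n)$, after which Fatou and the $T$-invariance of $G$ do the rest. The only slightly delicate point is ensuring that the nestedness hypothesis is genuinely used, which it is precisely in establishing $G\subset T^{-1}G$ and hence the zero--one law; without nestedness one does not expect $G$ to be $T$-invariant and the same conclusion need not hold.
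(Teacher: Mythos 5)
Your argument is correct, but it takes a genuinely different route from the paper's. The paper argues by contradiction: assuming $\mu(G)=0$ (using the same zero--one law you invoke), it exhausts $G^c$ by the sets $A_N=\{x\in G^c: T^i x\notin B_i \text{ for all } i\ge N\}$, observes that on $A_N$ one has $S_n\le N$ so that $S_n/E_n\to 0$ and hence $\int_{A_N}(S_n/E_n-1)^2\,d\mu\to\mu(A_N)$, and derives the contradiction $\eta>\mu(A_N)$ for all $N$ while $\mu(A_N)\uparrow 1$. You instead argue directly via Paley--Zygmund: from $E(S_n)=E_n$ and $E(S_n^2)<(1+\eta)E_n^2$ you get the uniform lower bound $\mu\{S_n>E_n/2\}\ge 1/(4(1+\eta))$, then reverse Fatou and monotonicity of $S_n$ (together with $E_n\to\infty$, the standing assumption of this section, which both proofs need) place a set of positive measure inside $G$, and the zero--one law finishes. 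Both proofs hinge on the same two ingredients (the invariance of $G$ coming from nestedness plus ergodicity, and a second-moment bound), but your version buys something the paper's does not: it never uses $\eta<1$, only $\eta<\infty$, since any uniform positive lower bound on $\mu\{S_n>E_n/2\}$ suffices once the dichotomy is available; the paper's contradiction $\eta>\mu(A_N)\to 1$ genuinely requires $\eta<1$. Your proof therefore establishes a slightly stronger statement. All the individual steps check out: the identity $E(S_n^2)=E(S_n-E_n)^2+E_n^2$, the Paley--Zygmund bound with threshold $\tfrac12 E_n$, the inequality $\mu(\limsup_n A_n)\ge\limsup_n\mu(A_n)$ for sets in a probability space, and the inclusion $\limsup_n\{S_n>E_n/2\}\subset G$.
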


\begin{proof}
  Let $G$ be the set of points $x$ such that $T^i x\in B_i$ for
  infinitely many $i$. Then $\mu (G)=1$ or $\mu (G)=0$. We assume $\mu
  (G)=0$ and derive a contradiction.
  \[
  \eta>\int_X (\frac{S_n (x)}{E_n}-1)^2 \, d\mu \geq \int_{G^c} (\frac{S_n
    (x)}{E_n}-1)^2 \, d\mu
  \]
  Let $A_N:=\{x \in G^c : T^i (x)\not \in B_i \mbox{ for all } i\ge N\}$. Then 
  \[
  \eta>\int_{G^c} (\frac{S_n (x)}{E_n}-1)^2 \, d\mu \ge \int_{A_N}
  (\frac{S_n (x)}{E_n}-1)^2 \, d\mu
  \]
  But $\lim_{n\to \infty} \int_{A_N} (\frac{S_n (x)}{E_n}-1)^2 \,
  d\mu=\mu (A_N)$. Hence $\eta> \mu (A_N)$ for all $N$, a
  contradiction.
\end{proof}

We now state our main result of this section. Let $B_i (p)$ be a
decreasing sequence of balls about a point $p$, $S_n =\sum_{j=0}^{n-1}
1_{B_j}$ and $E_n=\sum_{j=0}^{n-1} \mu (B_j)$.

\begin{thm}\label{thm:exp}
  Suppose that $(T,X,\mu)$ has exponential decay of correlations, that
  property (B) holds, that $ B_i=B(p,r_i)$ for some point $p$ and $\mu
  (B_i) \ge i^{-1}$. Also assume that
  \[
  \lim_{i\to \infty} \mu_{B_i} \{ y\in B_i : \tau_{B_i} \mu
  (B_i) < t\} =F(t)
  \]
  for some distribution function $F(t)$ such that $\lim_{t\to 0^+} F(t)=0$.
  Then for $\mu$ a.e.\ $x\in X$ 
  \[
  \limsup \frac{S_n (x)}{E_n}\ge 1.
  \]
\end{thm}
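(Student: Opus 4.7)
My plan is to verify the hypothesis of Lemma~\ref{lemma:seq}: once I show $\int(S_n-E_n)^2\,d\mu = g(n)E_n^2$ with $g(n)\to 0$, the conclusion $\limsup_n S_n(x)/E_n\ge 1$ for $\mu$-a.e.~$x$ is immediate. Expanding the variance, the diagonal contributes $\sum_i\mu(B_i)(1-\mu(B_i))\le E_n = o(E_n^2)$, so the task reduces to showing
\[
\sum_{1\le i<j\le n}\bigl[\mu(B_i\cap T^{-(j-i)}B_j)-\mu(B_i)\mu(B_j)\bigr]=o(E_n^2).
\]
I would fix a cutoff $L=L(n)$ of order $\log n$ and split each pair according to whether $k:=j-i\le L$ (close) or $k>L$ (far).

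For the close pairs I use the nestedness $B_j\subset B_i$ to get $\mu(B_i\cap T^{-k}B_j)\le \mu(B_i\cap T^{-k}B_i)$, so that
\[
\sum_{k=1}^L\mu(B_i\cap T^{-k}B_i)=\int_{B_i}\Bigl(\sum_{k=1}^L 1_{B_i}\circ T^k\Bigr)d\mu\le L\,\mu(B_i)\,\mu_{B_i}\{\tau_{B_i}\le L\},
\]
since the integrand vanishes outside $\{\tau_{B_i}\le L\}$ and is bounded by $L$ on it. For any $t>0$ and indices $i\ge L/t$, the ratio $L\mu(B_i)\le t$ and the hypothesis $\mu_{B_i}\{\tau_{B_i}\mu(B_i)\le t\}\to F(t)$ gives $\mu_{B_i}\{\tau_{B_i}\le L\}\le F(t)+o(1)$. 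Summing over such $i$ yields a contribution bounded by $LF(t)E_n+o(LE_n)=cF(t)E_n^2+o(E_n^2)$, where $c$ is the constant $\lim L/\log n$. The remaining small-$i$ indices $i<L/t$ are handled by the trivial bound $\mu(B_i\cap T^{-k}B_j)\le \mu(B_i)$ and contribute only $O(L\log(L/t))=O(\log n\cdot\log\log n)=o(E_n^2)$.

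For the far pairs I approximate each $1_{B_i}$ by a Lipschitz function $\widetilde f_i$ that is $1$ on $B_i$, supported in the $\delta_i$-neighborhood of $B_i$, with $\|\widetilde f_i\|_{\mathrm{Lip}}\le \delta_i^{-1}$ and $\|\widetilde f_i-1_{B_i}\|_{L^1(\mu)}\le \delta_i^{\delta}$ thanks to property~(B). Exponential decay of correlations controls the Lipschitz covariance by $C\alpha^k/(\delta_i\delta_j)$, and the $L^1$-approximation contributes additional terms of order $\delta_i^{\delta}+\delta_j^{\delta}$ per pair. Taking $\delta_i$ to be a small negative power of $n$ (so that $n^2\delta_i^{\delta}=o((\log n)^2)$) and choosing $L$ a large enough multiple of $\log n$ so that $\alpha^L$ dominates the polynomial growth of $\sum_i\delta_i^{-2}$, the total far contribution is $o(E_n^2)$.

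The main obstacle I anticipate is the tension between the two cutoffs: the exponential factor $\alpha^L$ forces $L$ to grow like $\log n$ in order to beat the loss from Lipschitz approximation, but this choice produces a close-range ratio bounded by a positive constant times $F(t)$ rather than $o(1)$. The crucial point is that the constant is multiplied by $F(t)$, which by the hypothesis $F(0^+)=0$ can be made arbitrarily small by taking $t$ small enough (after $L$ and $\delta_i$ are fixed). Hence $\limsup_n\operatorname{Var}(S_n)/E_n^2\le cF(t)$ for every $t>0$, so $\operatorname{Var}(S_n)/E_n^2\to 0$, and Lemma~\ref{lemma:seq} completes the proof.
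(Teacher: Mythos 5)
Your proposal is correct and follows essentially the same route as the paper's proof: both verify the second-moment hypothesis of Lemma~\ref{lemma:seq} by splitting the covariance sum at a gap of order $\log n$, bounding the short-gap terms via nestedness and the return-time distribution (so that the resulting constant carries a factor $F(t)$ which is made small using $F(0^+)=0$), and bounding the long-gap terms via exponential decay of correlations applied to Lipschitz approximations whose error is controlled by property (B). The only differences are cosmetic (your cutoff and approximation scales depend on $n$ rather than on $i$).
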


\begin{proof}

  We choose $c\ge 0$  so that  for all $i$
  \[
  (*) \sum_{\beta=1}^{\infty}C (i+c\log i +\beta )^{2/\delta}
  i^{2/\delta} \alpha^{c\log i +\beta}\le i^{-2}.
  \]
  
Since $B_j\subset B_i$ for $j>i$ 
  \[
  \sum_{j=i}^{i+c\log i} \mu (B_i \cap T^{-(j-i)} B_j) \le
  \sum_{j=i}^{i+c\log i} \mu (B_i \cap T^{-(j-i)} B_i).
  \]

 Since $\lim_{t\to 0} F(t)=0$, given $\eta>0$ there exists $t^*$ such  that $F(t)c<\eta$ for all $0<t\le t^*$.    As 
  \[
  \lim_{i\to \infty} \mu_{B_i} \{ y\in B_i : \tau_{B_i} \mu (B_i) < t\} =F(t) 
  \]
  there exists a number $n^*$ such that  for $i\ge n^*$ 
  \[
  \mu_{B_i} \{ y\in B_i : \tau_{B_i}  < -c \log \mu (B_i)\}\le F(t^*)
  \]
  Hence for $i>n^*$ 
  \[
  \sum_{j=i}^{i+c\log i} \mu (B_i \cap T^{-(j-i)} B_j) \le
  \sum_{j=i}^{i+c\log i} \mu (B_i \cap T^{-(j-i)} B_i)\le F(t^*)
  \frac{c\log i} {i}.
  \]
Recalling that  
\[
E[(S_n-E_n)^2]=2\sum_{i=1}^{n}\sum_{j>i}(\mu (B_i \cap T^{-(j-i)} B_j) -\mu(B_i) \mu (B_j)) 
+ \sum_{i=1}^n(\mu (B_i ) -\mu (B_i)^2)
\]
we note first that $\sum_{i=1}^n (\mu (B_i ) -\mu (B_i)^2) \le E_n$ and write 
\[
\sum_{i=1}^{n}\sum_{j>i}(\mu (B_i \cap T^{-(j-i)} B_j) -\mu(B_i) \mu (B_j))
=\sum_{n\ge i>n^*}\sum_{j>i}(\mu (B_i \cap T^{-(j-i)} B_j) -\mu(B_i) \mu (B_j))
\]
\[
+\sum_{1\le i<n^*}\sum_{j>i}(\mu (B_i \cap T^{-(j-i)} B_j) -\mu(B_i) \mu (B_j))
\]
 The  term 
$\sum_{1\le i<n^*}\sum_{j>i}(\mu (B_i \cap T^{-(j-i)} B_j) -\mu(B_i) \mu (B_j))$
we bound by $n^* E_n$ while 
\[
\sum_{n\ge i>n^*}\sum_{j>i}(\mu (B_i \cap T^{-(j-i)} B_j) -\mu(B_i) \mu (B_j))
\]
\[
\le \sum_{i=n^*}^{n}  c F(t^*) \frac{\log i}{i} +\sum_{i=n^*}^{n} \sum_{j=i+c\log i}^{n}(\mu (B_i \cap T^{-(j-i)} B_j) -\mu(B_i) \mu (B_j))
\]
\[
\le\frac{\eta}{2} (\log n)^2+K
\]
where  we have used to the estimate $(*)$ to bound the second term by a constant $K$ . Thus for $n>n^*$, $E[(S_n-E_n)^2] <\eta
E_n^2 + 2 K +n^* E_n$. Since $\eta$ was arbitrary and $E_n$ diverges this implies that given $\epsilon>0$ there exists an $N$ such that for all $n>N$, $E[(S_n-E_n)^2] <\epsilon E_n^2$. Thus
Lemma~\ref{lemma:seq} implies the result.

\end{proof}


\subsection{Applications of Theorem~\ref{thm:exp}.}

We now give a brief list of systems, beside Axiom A~\cite{Hirata},
which have been shown to have the property that for $\mu$ a.e.\ $p$ if
$ B_i=B(p,r_i)$ then
\[
\lim_{r_i\to 0} (\mu_{B_i})^{-1} \{ y\in B_i : \tau_{B_i} \mu (B_i) < t\} =F(t) 
\]
for a distribution function $F(t)$ such that $\lim_{t\to 0}
F(t)=0$. In the examples below $F(t)=1-e^{-t}$, an exponential law. We
abbreviate this property by saying that the system has an exponential
law for first return times to balls (recall this holds only for $\mu$
a.e.\ point).

\noindent {\it Dispersing billiard systems:} These systems satisfy
assumption (B) (as the invariant measure is equivalent to
Lebesgue). The class of dispersing billiards considered by
Young~\cite{Young} and Chernov~\cite{Chernov} have exponential decay
of correlations. An exponential law for first return times to balls
has been established by Gupta et al.~\cite{GHN} (and a Poisson
distribution for further visits to balls by Chazottes and
Collet~\cite{Chazottes_Collet}).

\noindent {\it Lozi maps}~\cite{Collet-Levy,Misiurewicz}: it is shown
in Gupta et al.~\cite{GHN} that assumption (B) and an exponential
return time law is satisfied by a broad class of Lozi mappings (which
have exponential decay of correlations).

\noindent {\it Compact group extensions of Anosov systems:}
Dolgopyat~\cite{Dolgopyat} has shown that compact group extensions of
Anosov diffeomorphisms are typically rapid mixing i.e.\ have
superpolynomial decay of correlations. These systems satisfy also
Assumption (B) as they are volume preserving. Gupta~\cite{Gupta} has
shown the existence of an exponential law for the first return times
to nested balls if the system is rapidly mixing.

\noindent {\it Interval maps:} one-dimensional non-uniformly expanding
maps with an absolutely continuous invariant probabiility measure
$\mu$ and density $h = \frac{d\mu}{dm}\in L^{1+\delta} (m)$ for some
$\delta>0$ satisfy condition (A) and (B). The class of maps considered
by Collet~\cite{Collet} have been shown to have an exponential law for
first return times to balls.

\section{Short return times and sequences $(B_i)$ such that $\mu (B_i)\ge (i\log i)^{-1}$}

In the theory of return time statistics and extreme value theory a
crucial role is played by short return times. Recent research 
has established that for certain chaotic  dynamical systems short returns
are rare in the sense that we call property (C):

(C) for $\mu$ a.e.\ $p\in X$ there exists $\eta>0$ and $k>1$ such
that for all $i$ sufficiently large 
\[
\mu (B_i (p) \cap T^{-r} B_i (p)
)\le i^{-1-\eta}
\]
for all $r=1,\ldots, \log^k (i)$.



The property (C) is a form of non-recurrence which implies certain
limit laws in return time statistics and extreme value theory, to our
knowledge first proved in the setting of non-uniformly expanding maps
by Collet~\cite{Collet}.  It has been verified for various systems,
for example Sinai dispersing billiards and Lozi mappings~\cite{GHN}.
We give the proof of Property (C) for Sinai dispersing billiards as an
appendix.  Beside Axiom A systems, dynamical systems for which
Property (C) have been established include:

(1) {\it Billiard maps for dispersing billiards without cusps:} for
example the systems described
in~\cite{Young,Chernov,Chernov_Markarian}.  For a proof of this
property see the Appendix.

(2) {\it Lozi maps~\cite{Collet-Levy,Misiurewicz}}: it is shown in
Gupta et al.~\cite{GHN} that assumption (C) is satisfied by a broad
class of Lozi mappings.  Gupta~\cite{Gupta} shows the same for toral
extensions of certain non-uniformly and uniformly expanding intervals
maps.

(3) {\it Certain one-dimensional maps:} Collet first established
Property (C) in the setting of 1-d non-uniformly expanding maps with
acip with exponential decay of correlations~\cite{Collet}. These
results have been generalized to a broad class of 1d maps, including
Lorenz like maps~\cite{GHN,HNT}.

\begin{thm}\label{thm:short-returns}
  Suppose $(T,X,\mu)$ has exponential decay of correlations and
  satisfies conditions (B) and (C).  Then for $\mu$ a.e.\ $p\in X$ if
  $B_i=B(p,r_i)$ is a sequence of decreasing balls about $p$ with $\mu
  (B_i) \ge (i\log i)^{-1}$ then
  \[
  \lim_{n\rightarrow \infty} \frac{S_n (x)}{E_n}=1
  \]
  for $\mu$ a.e.\ $x\in X$, where as before $S_n=\sum_{j=0}^{n-1} 1_{B_i}$
  and $E_n=E(S_n)=\sum_{j=0}^{n-1}\mu (B_j)$
\end{thm}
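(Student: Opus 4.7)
The plan is to follow the pattern of Theorems~\ref{thm:large} and~\ref{thm:log}: approximate $1_{B_i}$ by a Lipschitz function $\tilde f_i$, set $f_k=\tilde f_k\circ T^k$ and $g_k=E(\tilde f_k)$, and apply Sprindzuk's Proposition~\ref{prop:sprindzuk}. The new ingredient is that property (C) supplies the short-range correlation bound that decay of correlations alone cannot give when $\mu(B_i)$ is only $(i\log i)^{-1}$. I fix $p$ in the full-measure set on which (C) holds with constants $\eta>0$ and $k>1$, and take $\tilde f_i$ equal to $1$ on $B_i$, vanishing outside $\hat B_i := B(p,\,r_i+(i(\log i)^2)^{-1/\delta})$, with $\|\tilde f_i\|_{\mathrm{Lip}}\le (i(\log i)^2)^{1/\delta}$. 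By (B) the annulus $\hat B_i\setminus B_i$ has $\mu$-measure at most $(i(\log i)^2)^{-1}$, which is summable in $i$, so the classical Borel--Cantelli lemma lets me replace $1_{B_k}\circ T^k$ by $f_k$ when proving the SBC asymptotic.

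To verify the hypothesis of Proposition~\ref{prop:sprindzuk} I expand
\[
E\Big[\Big(\sum_{m<k\le n}(f_k-g_k)\Big)^2\Big] = \sum_{m<k\le n}\mathrm{Var}(f_k) + 2\sum_{m<i<j\le n}\bigl(E(f_if_j)-E(f_i)E(f_j)\bigr).
\]
The variance sum is bounded by $\sum_k E(\tilde f_k)\le 2E_n$. For the covariance sum I split the inner sum at the threshold $\Delta_i=\lfloor(\log i)^k\rfloor$. On the short-range block $i<j\le i+\Delta_i$, since $\hat B_j\subset\hat B_i$ for $j\ge i$, I bound $E(f_if_j)\le\mu(\hat B_i\cap T^{-(j-i)}\hat B_i)\le i^{-1-\eta}$ by applying property (C) to the ball $\hat B_i$ about $p$, so the total short-range contribution is of order $\sum_i(\log i)^k\, i^{-1-\eta}$, which converges. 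On the long-range block $j>i+\Delta_i$ I use exponential decay (A) to estimate $|E(f_if_j)-E(f_i)E(f_j)|\le C\alpha^{j-i}(i(\log i)^2)^{1/\delta}(j(\log j)^2)^{1/\delta}$; summing over $j$ produces a factor $\alpha^{\Delta_i}=\exp(-c(\log i)^k)$, which, since $k>1$, beats every power of $i$ and yields a contribution summable in $i$.

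Taking $h_k$ of the form $C\mu(B_k)+k^{-1-\eta/2}$ then verifies \eqref{eq:sprindzuk} with $\theta(n)=O(E_n)$, and Proposition~\ref{prop:sprindzuk} gives $\sum_{k\le n}f_k(x)=\sum_{k\le n}g_k+O(E_n^{1/2}(\log E_n)^{3/2+\epsilon})$ for $\mu$-a.e.\ $x$. Since $\mu(B_i)\ge(i\log i)^{-1}$ forces $E_n\to\infty$, the error is $o(E_n)$, and combined with the Borel--Cantelli reduction of the first step this yields $S_n(x)/E_n\to 1$ $\mu$-a.e. The main technical obstacle is the interface between property (C), which is stated for the original balls $B_i$, and the slightly enlarged balls $\hat B_i$ appearing in the Lipschitz approximation: a brief verification is needed that (C) transfers to $\hat B_i$ (itself a ball about $p$ of comparable measure), and the threshold $\Delta_i=(\log i)^k$ must be calibrated precisely so that both the (C)-based short-range estimate and the exponential-decay-based long-range estimate produce convergent sums.
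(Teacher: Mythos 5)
Your proposal follows essentially the same route as the paper's proof: Lipschitz approximation of $1_{B_i}$, Sprindzuk's Proposition~\ref{prop:sprindzuk}, and a split of the covariance sum at a poly-logarithmic threshold, with property (C) controlling the short-range block and exponential decay of correlations the long-range block. If anything your write-up is slightly more careful than the printed argument, which splits at $\Delta=[\log^2 i]$ and whose displayed short-range chain $I_i\le C(\log^2 i)(i\log i)^{-1}\le Ci^{-1-\eta/2}$ only becomes correct once (C) is invoked exactly as you do; your calibration $\Delta_i=\lfloor(\log i)^k\rfloor$ matches the range on which (C) is actually guaranteed, and your remark about transferring (C) to the enlarged balls $\hat B_i$ addresses a point the paper passes over silently.
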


\begin{cor} \label{cor:billiards}
If $(T,X,\mu)$ is a Sinai dispersing billiard map (see Appendix for
precise description) or a Lozi map (see~\cite{GHN}) then in the
notation of Theorem~\ref{thm:short-returns} for $\mu$ a.e.\ $p\in X$ if
$B_i$ is a sequence of decreasing balls about $p$ with $\mu (B_i) \ge
(i\log i)^{-1}$ then
\[
\lim_{n\rightarrow \infty} \frac{S_n (x)}{E_n}=1
\]
for $\mu$ a.e.\ $x\in X$.
\end{cor}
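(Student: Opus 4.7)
Since this corollary concerns specific model systems and the heavy lifting is already contained in Theorem~\ref{thm:short-returns}, the plan is to derive it as an immediate application: for each of the two classes of maps (Sinai dispersing billiards and Lozi maps), I verify the three hypotheses of Theorem~\ref{thm:short-returns}---exponential decay of correlations, assumption (B), and the short return-time bound (C)---and then quote the conclusion of that theorem verbatim. No new probabilistic or dynamical estimate is required.

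\textbf{Verification for dispersing billiards.} First I would note that for the Sinai dispersing billiard maps considered by Young~\cite{Young} and Chernov~\cite{Chernov}, exponential decay of correlations for H\"older (and in particular Lipschitz) observations is proven in those works, which supplies hypothesis (A) with $p(k)\le C\alpha^k$. Assumption (B) is a direct consequence of the fact that the invariant SRB measure is absolutely continuous with respect to Lebesgue with a bounded density on the collision space: the thin annulus $\{x:r<d(x,p)<r+\varepsilon\}$ has Lebesgue measure $O(\varepsilon)$, uniformly in the centre $p$ and for all $\varepsilon<r\le r_0$, so (B) holds with any $0<\delta<1$. Assumption (C), the short return-time bound $\mu(B_i(p)\cap T^{-r}B_i(p))\le i^{-1-\eta}$ for $r\le \log^k i$, is exactly the statement proved in the Appendix for $\mu$-a.e.\ centre $p$.

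\textbf{Verification for Lozi maps.} For Lozi maps I would cite Gupta et al.~\cite{GHN}: that reference produces a Young tower from which exponential decay of correlations for Lipschitz observables is derived (hypothesis (A)), establishes assumption (B) uniformly in $p$ by geometric control of the stable/unstable foliations, and verifies (C) for $\mu$-a.e.\ $p$ by the same short return-time analysis used for billiards. Thus for Lozi maps the three hypotheses of Theorem~\ref{thm:short-returns} are satisfied on a full-measure set of centres $p$.

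\textbf{Conclusion and potential obstacle.} With all three hypotheses verified on a full-measure set of centres $p$ in either class, Theorem~\ref{thm:short-returns} applied pointwise at such $p$ yields $S_n(x)/E_n\to 1$ for $\mu$-a.e.\ $x$, for every nested sequence $B_i=B(p,r_i)$ with $\mu(B_i)\ge(i\log i)^{-1}$, which is the conclusion of Corollary~\ref{cor:billiards}. The only point that needs a moment's care is that (C) holds only for $\mu$-a.e.\ $p$ whereas (B) is uniform in $p$; intersecting the full-measure set where (C) holds with the set where $(\log i)^k$ is large enough for the estimates of Theorem~\ref{thm:short-returns} to kick in is harmless, so there is no substantive obstacle---the argument is essentially a bookkeeping exercise quoting the appendix and~\cite{GHN,Young,Chernov}.
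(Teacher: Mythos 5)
Your proposal is correct and follows essentially the same route as the paper: the corollary is obtained by checking exponential decay of correlations, assumption (B), and Property (C) for the two classes of systems (the Appendix for billiards, \cite{GHN} for Lozi maps) and then invoking Theorem~\ref{thm:short-returns}. The only detail the paper makes explicit that you gloss over is that the short-return estimates in \cite{GHN} are stated for the sequence $\mu(B_i)=i^{-1}$, so for the scale $(i\log i)^{-1}$ one needs the (straightforward) modification carried out in the Appendix for billiards and an analogous one for Lozi maps.
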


\begin{proof}[Proof of Corollary~\ref{cor:billiards}]
  The proof that Property (C) holds for Sinai dispersing billiard maps
  with $k=5$ is given in the Appendix. It is a very slight
  modification of the proof in Gupta et al.~\cite{GHN} where the
  sequence $\mu (B_i)=i^{-1}$ was considered.  A similar
  straightforward modification of the proof of the corresponding
  result for sequences $(B_i)$ such that $\mu (B_i)=i^{-1}$ given in
  Gupta et al.~\cite{GHN} in the setting of Lozi maps establishes the
  conclusions of the corollary for Lozi maps.
\end{proof}

\begin{proof}[Proof of Theorem~\ref{thm:short-returns}]
  We consider the case $\mu (B_i)=(i\log i )^{-1}$, which implies the
  corresponding result for $\mu (B_i)\ge (i\log i )^{-1}$.  We define
  $f_k$ and $g_k$ as before and split up $\sum_{j=i+1}^n E(f_i
  f_j)-E(f_i)E(f_j)$ into the terms $I_i$ and $II_i$ with $\Delta=[\log^2i]$.
  A rough estimate yields
  \begin{eqnarray*}
    I_i&=&\sum_{j=i+1}^{i+\log^2 i} E(f_i f_j)-E(f_i)E(f_j)\\
    &\le&\sum_{j=i+1}^{i+\log^2 i} E(f_i f_j) \\
    &\le & C (\log^2 i) (i\log i)^{-1} \\
    &\le& C i^{-1-(\eta/2)}
  \end{eqnarray*}
  for large $i$ and any positive $\eta$.
 For the second sum we obtain
  \begin{eqnarray*}
   II_i&=& \sum_{j=i+\log^2 i}^{n} E(f_i    f_j)-E(f_i)E(f_j)\\
    &\le&\sum_{\beta=1}^{\infty} C i^{2/\delta}
    (i+\log^2 i + \beta)^{2/\delta} \alpha^{\log^2 i + \beta}\\ &\le&
    C (i\log i)^{-1}
  \end{eqnarray*}
  for sufficiently large $i$. As above in light of Proposition~\ref{prop:sprindzuk}  this implies the  conclusion of the theorem.
\end{proof}

\section{Discussion.}

There are several questions that are prompted by this work. Here are some that we have considered but not resolved as yet.

\noindent (1)  If $(T,X,\mu)$ is a smooth dynamical system with acip and positive metric entropy is it true that for $\mu$ a.e.\ $p$
if $B_i=B(p,r_i)$ is a  nested sequence of balls about $p$ then 
\[
\lim_{r_i\to 0} (\mu_{B_i})^{-1} \{ y\in B_i : \tau_{B_i} \mu (B_i) < t\} =F_p(t) 
\]
for a distribution function $F_p(t)$ such that $\lim_{t\to 0} F_p(t)=0$?

\noindent (2) Chazottes and Collet~\cite{Chazottes_Collet} have shown that if $(T,X,\mu)$ is a dynamical system modeled
by a  Young tower, with exponential decay of correlations and a one-dimensional unstable foliation then for $\mu$ a.e.\ point 
$p$, if $B_i=B(p,r_i)$ is a  nested sequence of balls about $p$ then 
\[
\lim_{r_i\to 0} (\mu_{B_i})^{-1} \{ y\in B_i : \tau_{B_i} \mu (B_i) < t\} =1-e^{-t}
\]
In fact they have shown much more, including a Poisson law for multiple returns. If such systems satisfied Property (B) (respectively Property (C)) 
then the conclusion of Theorem~\ref{thm:exp}  (respectively Theorem~\ref{thm:short-returns}) would  hold. Does Property (B) or 
(C) hold for $\mu$ a.e.\ point in such systems?

\noindent (3) Is there an example of a smooth volume preserving dynamical system $(T,X,\mu)$ which has exponential decay of correlations yet
there is  a sequence of balls $B_i$, $\mu (B_i)\ge i^{-1}$ which is not Borel--Cantelli?

\subsection{Appendix: Property C for Planar Dispersing  Billiard Maps}\label{appendix}

We first describe the class of billiards for which we can prove Property C.  For  a good general reference to billiards
see~\cite{Chernov_Markarian}.

Let $\Gamma = \{\Gamma_i, i = 1, \ldots, k\}$ be a family of pairwise
disjoint, simply connected $C^3$ curves with strictly positive
curvature on the two-dimensional torus $\mathbb{T}^2$.  The billiard
flow $B_t$ is the dynamical system generated by the motion of a point
particle in $Q= \mathbb{T}^2/(\cup_{i=1}^k (\interior
\Gamma_i)$ with constant unit velocity inside $Q$ and with elastic
reflections at $\partial Q=\cup_{i=1}^k \Gamma_i$, where elastic means
``angle of incidence equals angle of reflection''.  If each $\Gamma_i$
is a circle then this system is called a periodic Lorentz gas. The
billiard flow is Hamiltonian and preserves a probability measure
(which is Liouville measure) $\tilde{\mu}$ given by
$d\tilde{\mu}=C_{Q} dq~dt$ where $C_{Q}$ is a normalizing constant and
$q\in Q$, $t\in \R$ are Euclidean coordinates.

We first consider the billiard map $T \colon \partial Q \to \partial
Q$.  Let $r$ be a one-dimensional co-ordinatization of $\Gamma$
corresponding to length and let $n(r)$ be the outward normal to
$\Gamma$ at the point $r$. For each $r\in \Gamma$ we consider the
tangent space at $r$ consisting of unit vectors $v$ such that
$(n(r),v)\ge 0$. We identify each such unit vector $v$ with an angle
$\theta \in [-\pi/2, \pi/2]$. The boundary $M$ is then parametrized by
$M:=\partial Q=\Gamma\times [-\pi/2, \pi/2]$ so that $M$ consists of
the points $(r,\theta)$. $T \colon M \to M$ is the Poincar\'e map that
gives the position and angle $T(r,\theta)=(r_1,\theta_1)$ after a
point $(r,\theta)$ flows under $B_t$ and collides again with $M$,
according to the rule angle of incidence equals angle of
reflection. Thus if $ (r,\theta)$ is the time of flight before
collision $T(r,\theta)=B_{h (r,\theta)} (r,\theta)$.  The billiard map
preserves a measure $d\mu=c_{M} \cos \theta dr d\theta$ equivalent to
$2$-dimensional Lebesgue measure $dm=dr\,d\theta$ with density $\rho
(x)$ where $x=(r,\theta)$.

We say that the billiard map and flows satisfies the finite horizon condition if 
the time of flight $h(r,\theta)$  is bounded above.
A good reference for background results for this section are the papers~\cite{BSC1,BSC2,Young, Chernov}.

It is known (see~\cite[Lemma 7.1]{Chernov} for finite horizon and
~\cite[Section 8]{Chernov} for infinite horizon) that dispersing
billiard maps expand in the unstable direction in the Euclidean metric
$|\cdot|=\sqrt{(dr)^2+(d\phi)^2}$ , in that $|DT_u^n v|\ge C
\tilde{\lambda}^n |v|$ for some constants $C$, $\tilde{\lambda}>1$
which is independent of $v$.  In fact $|L_n| \ge C\tilde{ \lambda}^n
|L_0|$ where $L_0$ is a segment of unstable manifold (once again in
the Euclidean metric) and $L_n$ is $T^n L_0$.

We choose $N_0$ so that $\lambda:= C\tilde{ \lambda}^{N_0}>1$ and then $T^{N_0} $ (or $DT^{N_0}$) expands
unstable manifolds (tangent vectors to unstable manifolds) uniformly in the Euclidean
metric.

It is common to use the $p$-metric in proving ergodic properties of
billiards. Young uses this semi-metric in~\cite{Young}. Recall that
for any curve $\gamma$, the $p$-norm of a tangent vector to $\gamma$
is given as $|v|_p =\cos\phi(r)|dr|$ where $\gamma$ is parametrized in
the $(r, \phi)$ plane as $(r, \phi(r)).$ The Euclidean metric in the
$(r, \phi)$ plane is given by $ds^2 = dr^2+d\phi^2$; this implies that
$|v|_p\le \cos\phi(r)ds\le ds = |v|$.  We will use $l_p (C)$ to denote
the length of a curve in the $p$-metric and $l(C)$ to denote length in
the Euclidean metric.  If $\gamma$ is a local unstable manifold or
local stable manifold then $C_1 l (\gamma )_p \le l(\gamma) \le C_2
\sqrt{ l_p (\gamma ) }$.

For planar dispersing billiards there exists an invariant measure
$\mu$ (which is equivalent to 2-dimensional Lebesgue measure) and
through $\mu$ a.e.\ point $x$ there exists a local stable manifold
$W_{loc}^s (x)$ and a local unstable manifold $W_{loc}^u (x)$.  The
SRB measure $\mu$ has absolutely continuous (with respect to Lebesgue
measure ) conditional measures $\mu_x$ on each $W_{loc}^u (x)$. The
expansion by $DT$ is unbounded however in the $p$-metric at
$\cos\theta=0$ and this may lead to quite different expansion rates at
different points on $W_{loc}^u (x)$. To overcome this effect and
obtain uniform estimates on the densities of conditional SRB measure
it is common to define homogeneous local unstable and local stable
manifolds. This is the approach adopted
in~\cite{BSC1,BSC2,Chernov,Young}. Fix a large $k_0$ and define for
$k>k_0$
\[
I_k=\{(r,\theta): \frac{\pi}{2} -k^{-2}
<\theta<\frac{\pi}{2}-(k+1)^{-2} \}
\]
\[
I_{-k}=\{(r,\theta): -\frac{\pi}{2} +(k+1)^{-2}
<\theta<-\frac{\pi}{2}+k^{-2} \}
\]
and 
\[
I_{k_0}=\{(r,\theta): -\frac{\pi}{2} +k_0^{-2} <\theta<\frac{\pi}{2}-
k_0^{-2} \}.
\]
In our setting we call a local unstable (stable) manifold $W^u_{loc}
(x)$, ($W^s_{loc} (x)$) homogeneous if for all $n\ge 0$ $T^n W^u_{loc}
(x)$ ($T^{-n} W^s_{loc} (x)$) does not intersect any of the line
segments in $\cup_{k>k_0} (I_k\cup I_{-k})\cup I_{k_0}$. Homogeneous
$W^u_{loc} (x)$ have almost constant conditional SRB densities
$\frac{d\mu_x}{dm_x}$ in the sense that there exists $C>0$ such that
$\frac{1}{C} \le \frac{d\mu_x (z_1)}{dm_x} /\frac{d\mu_x (z_2)}{dm_x}
\le C$ for all $z_1,~z_2 \in W^u_{loc} (x)$ (see ~\cite[Section
  2]{Chernov} and the remarks following Theorem 3.1).

From this point on all the local unstable (stable) manifolds that we
consider will be homogeneous. Bunimovich et al.~\cite[Appendix 2,
  Equation A2.1]{BSC2} give quantitative estimates on the length of
homogeneous $W^u_{loc} (x)$. They show there exists $C,~\tau >0$ such
that $\mu \{ x: l(W_{loc}^s (x))<\epsilon \mbox{ or } l(W_{loc}^u
(x))<\epsilon \} \le C \epsilon^{\tau}$ where $l(C)$ denotes
1-dimensional Lebesgue measure or length of a rectifiable curve $C$.
In our setting $\tau$ could be taken to be $\frac{2}{9}$, its exact
value will play no role but for simplicity in the forthcoming
estimates we assume $0<\tau<\frac{1}{2}$.

The natural measure $\mu$ has absolutely continuous conditional
measures $\mu_x$ on local unstable manifolds $W_{loc}^u (x)$ which
have almost uniform densities with respect to Lebesgue measure on
$W_{loc}^u (x)$ by~\cite[Equation 2.4]{Chernov}.

Let $A_{\sqrt{\epsilon}}=\{ x: |W_{loc}^u (x)|>\sqrt{\epsilon}\}$ then
$\mu (A^c_{\sqrt{\epsilon}})< C\epsilon^{\tau/2}$.  Let $x\in
A_{\sqrt{\epsilon}}$ and consider $W_{loc}^u (x)$. Since $|T^{-k}
W_{loc}^u (x)|<\lambda^{-1} |W_{loc}^u (x)|$ for $k>N_0$ the optimal
way for points $T^{-k}(y)$ in $T^{-k} W_{loc}^u (x)$ to be close to
their preimages $y\in W_{loc}^u (x)$ is for $T^{-k} W_{loc}^u (x)$ to
overlay $W_{loc}^u (x)$, in which case it has a fixed point and it is
easy to see $l\{y\in W_{loc}^u (x): d(y,T^{-k} y)<\epsilon\}\le
l\{y\in \R: d(y,\frac{y}{\lambda})<\epsilon\}\le
(1-\lambda^{-1})\epsilon$. Accordingly $l\{y\in W_{loc}^u (x):
d(y,T^{-k} y)<\epsilon\}\le C\sqrt{\epsilon} l\{y\in W_{loc}^u
(x)\}$. Recalling that the density of the conditional SRB-measure
$\mu_x$ is bounded above and below with respect to one-dimensional
Lebesgue measure we obtain $\mu_x (A^c_{\sqrt{\epsilon}})<
C\sqrt{\epsilon}$. Integrating over all unstable manifolds in
$A_{\sqrt{\epsilon}}$ (throwing away the set $\mu
(A^c_{\sqrt{\epsilon}})$) we have $\mu \{ x: d(T^{-k}x, x)<\epsilon)
<C \epsilon^{\tau/2}$. Since $\mu$ is $T$-invariant $\mu \{ x: d(T^k
x, x)<\epsilon\} <C \epsilon^{\tau/2}$ for $k>N_0$. Hence for any
iterate $T^k$, $k>N_0$
\[
\mathcal{E}_{k} (\epsilon):= \mu \{ x: d(T^k x, x)<\epsilon \} < C \epsilon^{\tau/2}.
\] 
Define 
\[
E_k:= \{x: d(T^j x,x)\le \frac{2}{\sqrt{k}}~\mbox{ for some }~1\le j \le (\log k )^5 \}.
\]
We have shown that for any $\delta>0$, for all sufficiently large $k$,
$\mu (E_k)\le k^{-\tau/4+\delta}$. For simplicity we take $\mu
(E_k)\le k^{-\sigma}$ where $\sigma <\tau/4 -\delta$.

Define the Hardy--Littlewood maximal function $M_l$ for $\phi(x)=
1_{E_l} (x)\rho(x)$ where $\rho(x)=\frac{d\mu}{dm} (x)$, so that
\[
 M_l(x):=\sup_{a>0}\frac{1}{m(B_a(x))}\int_{B_a(x)} 1_{E_l}(y)\rho(y) \, dm(y).
\]
A theorem of Hardy and Littlewood~\cite[Theorem 2.19]{mattila} implies that
\[
m( |M_l|>C)\le \frac{\|1_{E_l} \rho \|_1}{C}
\]
where $\|\cdot \|_1$ is the $L^1$ norm with respect to $m$. 
Let 
\[
F_k:=\{ x: \mu (B_{k^{-\gamma/2}} (x)\cap E_{k^{\gamma/2}})\ge
(k^{-\gamma\beta /2} )k^{\gamma/2}.
\]
Then $F_{k}\subset \{M_{k^{\gamma/2}}>k^{-\gamma\beta /2} \}$ and hence
\[
m(F_k) \le \mu (E_{k^{\gamma/2}})k^{\gamma\beta /2}\le C
k^{-\gamma\sigma}k^{\gamma\beta /2}.
\]
If we take $0<\beta <\sigma$ and $\gamma >\sigma/2$ then for some
$\delta>0$, $ k^{-\gamma\sigma}k^{\gamma\beta /2}<k^{-1-\delta}$ and
hence
\[
\sum_k m(F_k)<\infty.
\]
Thus for $m$ a.e.\ (hence $\mu$ a.e.) $x_0\in X$ there exists $N(x_0)$
such that $x_0 \not \in F_k$ for all $k>N(x_0)$.  Thus along the
subsequence $n_k=k^{-\gamma/2}$, $\mu (B_{n_k}(x_0)\cap
T^{-j}B_{n_k}(x_0) ) \le n_k^{-1-\delta}$ for $k>N(x_0)$.  This is
sufficient to obtain an estimate along the subsequence $(n\log
n)^{-1}$.  Since $\lim_{k\to \infty} (\frac{k+1}{k})^{\gamma/2} =1$ if
$k^{\gamma/2} \le n\log n \le (k+1)^{\gamma/2}$ then for sufficiently
large $n$ $\mu (B_{(n\log n)^{-1}}(x_0)\cap T^{-j}B_{(n\log
  n)^{-1}}(x_0) ) \le \mu (B_{n_k}(x_0)\cap T^{-j}B_{n_k}(x_0) )\le
n_k^{-1-\delta} \le 2(n\log n)^{-1-\delta}$.  As $\frac{d\mu}{dm} (p)
$ is finite for $\mu$ a.e.\ $p$ this implies the result for $\mu
(B_i)=i\log i$ by the Lebesgue density theorem.  We now control the
iterates $1\le j \le N_0$. If $x_0$ is not periodic then $\min_{1\le i
  <j \le N_0} d(T^i x_0, T^j x_0) \ge s(x_0)>0$ and hence for large
enough $n$, for all $1\le j \le N_0$, $\mu ( B_{n^{-1}}(x_0)\cap
T^{-j}B_{n^{-1}}(x_0))=0$.

\end{document}